\newtheorem{theorem}{Theorem}[section]
\newtheorem{lemma}[theorem]{Lemma}
\newtheorem*{lemma*}{Lemma}
\numberwithin{equation}{section}
\newcommand{\labitem}[2]{%
\def\@itemlabel{\textbf{#1}}
\item
\def\@currentlabel{#1}\label{#2}}
\newcommand{\norm}[1]{\left\|{#1}\right\|}
\newcommand{\abs}[1]{\left|{#1}\right|}
\newcommand{\rkla}[1]{{\left(#1\right)}}
\newcommand{\trkla}[1]{{(#1)}}
\newcommand{\gkla}[1]{{\left\{#1\right\}}}
\newcommand{\tgkla}[1]{{\{#1\}}}
\newcommand{\ekla}[1]{{\left[#1\right]}}
\newcommand{\tekla}[1]{{[#1]}}
\newcommand{\tabs}[1]{|{#1}|}
\newcommand{\bs}[1]{\boldsymbol{#1}}
\newcommand{\Om}{\mathcal{O}}
\newcommand{\iO}{\int_{\Om}}
\newcommand{\dx}{\, \mathrm{d}{x}}
\newcommand{\dt}{\, \mathrm{d}t}
\newcommand{\ds}{\, \mathrm{d}s}
\newcommand{\dW}{\, \mathrm{d}W}
\newcommand{\dbeta}{\, \mathrm{d}\beta}
\newcommand{\ids}{I}
\newcommand{\nn}{^{n}}
\newcommand{\no}{^{n-1}}
\newcommand{\h}{_{h}}
\newcommand{\weakstartop}{{\mathrm{weak}-(*)}}
\newcommand{\Uh}{U_{h}}
\newcommand{\Th}{\mathcal{T}_h}
\newcommand{\Ihop}{\mathcal{I}_h}
\newcommand{\Ih}[1]{\Ihop\gkla{#1}}
\newcommand{\restr}[2]{\ensuremath{
  \left.\kern-\nulldelimiterspace 
  #1 
  \vphantom{\big|} 
  \right|_{#2} 
  }}
\newcommand{\extend}[2]{\ensuremath{
  \left.\kern-\nulldelimiterspace 
  #1 
  \vphantom{\big|} 
  \right|^{#2} 
  }}
\newcommand{\diam}{\operatorname{diam}}
\newcommand{\g}[1]{\mathfrak{g}_{#1}}
\newcommand{\p}{{\mathfrak{p}}}
\newcommand{\expected}[1]{\mathds{E}\ekla{#1}}
\newcommand{\Prob}{\mathds{P}}
\newcommand{\projop}{\mathcal{P}_{\Uh}}
\newcommand{\proj}[1]{\projop\gkla{#1}}
\newcommand{\per}{{\operatorname{per}}}
\DeclareMathOperator*{\esssup}{ess\,sup}
\newcommand{\Ito}{It\^{o}}
\newcommand{\sinc}[1]{\blacktriangle^{#1}\boldsymbol{\xi}^\tau}
\newcommand{\err}{\mathfrak{e}\h}
\newcommand{\phicont}{u}
\newcommand{\mytitle}{Strong error estimates for a fully discrete SAV scheme for the stochastic Allen--Cahn equation with multiplicative noise}
\newcommand{\myshorttitle}{Error estimates for a fully discrete SAV scheme}
\begin{document}
\title[\myshorttitle]{\mytitle}
\date{\today}
\author[S.~Metzger]{Stefan Metzger}
\address[S.~Metzger]{Friedrich--Alexander Universität Erlangen--Nürnberg,~Cauerstraße 11,~91058~Erlangen,~Germany}
\email{stefan.metzger@fau.de}


\keywords{stochastic Allen--Cahn equation, multiplicative noise, finite elements, strong rate of convergence, scalar auxiliary variable}
\subjclass[2010]{60H35, 65M60, 60H15, 65M12}



%
\selectlanguage{english}

\allowdisplaybreaks
\begin{abstract}
We analyze a fully discrete numerical scheme for the approximation of solutions to the stochastic Allen--Cahn equation with multiplicative noise.
The scheme uses a recently proposed augmented version of the scalar auxiliary variable method (cf.~[S.~Metzger, 2023, A convergent stochastic scalar auxiliary variable method, IMA J.~Numer.~Anal.]) 
\end{abstract}
\maketitle

\section{Introduction}\label{sec:introduction}
We analyze a recently suggested fully discrete, linear finite element scheme for the approximation of solutions to the stochastic Allen--Cahn equation with multiplicative noise (cf.~\cite{Metzger2024}).
On a periodic domain $\Om:=\trkla{0,1}^d\subset\mathds{R}^d$ ($d\in\tgkla{1,2,3}$), a time interval $\tekla{0,T}$, and a filtered probability space $\trkla{\Omega,\mathcal{A},\mathcal{F},\Prob}$ with a filtration $\mathcal{F}=\trkla{\mathcal{F}_t}_{t\in\tekla{0,T}}$, this equation reads as follows:\\
Find a stochastic process $\phicont\,:\,\Omega\times\tekla{0,T}\times\Om\rightarrow\mathds{R}$ satisfying
\begin{align}\label{eq:abstract:allencahn}
\mathrm{d}\phicont +\trkla{-\varepsilon\Delta\phicont +\varepsilon^{-1} F^\prime\trkla{\phicont}}\dt=\Phi\trkla{\phicont}\dW
\end{align}
with space-periodic boundary conditions.
Here, $\Phi$ is a $\phicont$-dependent Hilbert-Schmidt operator and $W$ denotes a cylindrical Wiener process.
The precise properties of $\Phi$ and $W$ will be specified in Section \ref{sec:notation}.\\
The deterministic version of \eqref{eq:abstract:allencahn}, the Allen--Cahn equation, describes the dynamics of a diffuse interface whose width is governed by the parameter $\varepsilon>0$.
This equation can be derived as the $L^2$-gradient flow of the Helmholtz free energy functional
\begin{align}\label{eq:def:energy}
\mathcal{E}\trkla{\phicont}:=\tfrac\varepsilon2\iO\abs{\nabla\phicont}^2\dx+\varepsilon^{-1}\iO F\trkla{\phicont}\dx\,
\end{align}
and satisfies the energy equality
\begin{align}\label{eq:energy:equality}
\mathcal{E}\trkla{\phicont}\big\vert_{t=T}+\int_0^T\iO\abs{-\varepsilon\Delta\phicont+\varepsilon^{-1}F^\prime\trkla{\phicont}}^2\dx\dt=\mathcal{E}\trkla{\phicont}\big\vert_{t=0}\,.
\end{align}
Here, $F$ denotes a double-well potential with minima in (or close to) the pure phases indicated by $\phi=\pm1$.
In this work, we shall discuss the case of positive polynomial double-well potentials, i.e.~we will consider $F\trkla{\phi}=\tfrac14\trkla{\phi^2-1}^2+\gamma$ with $\gamma>0$.
Here, shifting the double-well potential by adding the constant $\gamma$ does not change the dynamics of the resulting partial differential equation.
Yet, for the formulation of our numerical scheme, we will require $F>0$, i.e.~$\gamma$ can be seen as a numerical parameter.\\
As the equality \eqref{eq:energy:equality} constitutes the basis for stability and regularity results, satisfying a discrete analogon of \eqref{eq:energy:equality} is an important property of numerical schemes. 
This property, however, typically requires an implicit discretization of the nonlinear terms (see e.g.~the approaches proposed in \cite{Elliott1993} or \cite{Eyre98} which use convex-concave splittings or difference quotients for the approximation of $F^\prime$).
To reduce the computational costs linearization techniques like the invariant energy quadratization (IEQ) approach (see e.g.~\cite{Yang2016}, \cite{YangYu2018}, \cite{YangZhang20}, and the references therein) or the scalar auxiliary variable (SAV) method (cf.~\cite{ShenXuYang2018}, \cite{ShenXu18}, \cite{BouchritiPierreAlaa20}) were proposed in the recent years.
Both of these approaches introduce new auxiliary variables $q:=\sqrt{F\trkla{\phicont}}$ (IEQ) or $r:=\sqrt{\iO F\trkla{\phicont}\dx}$ (SAV).
With these additional auxiliary variables it is possible to derive linear schemes that are still stable with respect to the modified energies
\begin{subequations}
\begin{align}
&&\mathcal{E}_{\mathrm{IEQ}}\trkla{\phicont,q}&:=\tfrac\varepsilon2\iO\abs{\nabla\phicont}^2\dx +\varepsilon^{-1}\iO \abs{q}^2\dx\\
\text{~or~}&&\mathcal{E}_{\mathrm{SAV}}\trkla{\phicont,r}&:=\tfrac\varepsilon2\iO\abs{\nabla\phicont}^2\dx +\varepsilon^{-1}\abs{r}^2\,.
\end{align}
\end{subequations}
Although, these energies are only approximations of the original energy \eqref{eq:def:energy}, stability with respect to the modified energy is still sufficient to obtain analytical convergence results (see e.g.~\cite{ShenXu18}, \cite{Metzger2023}, \cite{LamWang2023}).
The development of improved versions of these schemes is still ongoing:
Variants of the SAV approach that aim for a better approximation of the energy can be found e.g.~in \cite{HouAzaiezXu19}, \cite{LiuLi20}, and \cite{ChengLiuShen2020}.
In this work, we analyze a recently proposed augmented version of the SAV method that is applicable to stochastic PDEs (cf.~\cite{Metzger2024}).\\

As the numerical investigation of stochastic partial differential equations typically requires the simulation of multiple paths, i.e.~the simulations have to be repeated multiple times using different independent sequences of (pseudo-)random numbers, the computational complexity of the employed numerical schemes becomes an even more prominent factor in the stochastic setting.
While for Lipschitz continuous nonlinearities even completely explicit schemes are viable (see e.g.~\cite{Hausenblas2002}, \cite{Hausenblas2003}, and \cite{GyongyMillet2009}), divergence of purely explicit discretizations was shown in \cite{HutzenthalerJentzenKloeden2011} (see also the preprint \cite{beccariEtAl2019_arxiv}) for stochastic differential equations with non-globally Lipschitz coefficients.
Consequently, the focus shifted towards implicit or semi-implicit discretizations (see e.g.~\cite{DeBouardDebussche2004}, \cite{JentzenKloedenWinkel2011}, \cite{BlomkerJentzen2013}, \cite{AntonopoulouBanasNurnbergProhl2021}) and towards tamed or truncated schemes (cf.~\cite{HutzenthalerJentzen2015}, \cite{Gyongy2016}, \cite{JentzenPusnik2019}, \cite{HutzenthalerJentzen2020}, \cite{QiAzaiezHuangXu2022}, and \cite{BeckerGessJentzenKloeden2023}).
Here, the latter approaches still treat the nonlinearities explicitly, but introduce a time step size dependent regularization which limits the influence of the nonlinearity.\\
The application of these discretization approaches to the stochastic Allen--Cahn equation was extensively studied.
Error estimates for tamed and truncated schemes for the stochastic Allen--Cahn equation with additive white noise can be found e.g.~in \cite{BeckerJentzen2019}, \cite{Wang2020}, \cite{CaiGanWang2021}, and \cite{BeckerGessJentzenKloeden2023}, while splitting or semi-implicit schemes for this setting were studied e.g.~in \cite{BrehierCuiHong2018}, \cite{BrehierGoudenege2019}, and \cite{BrehierGoudenege2020}.
Error estimates for fully implicit schemes with additive noise can be found for instance in \cite{KovacsLarssonLindgren2015}, \cite{KovacsLarssonLindgren2018}, \cite{LiuQiao2019}, \cite{QiWang2019}, and the references therein.
The case of multiplicative noise was studied e.g.~in \cite{FengLiZhang2017}, \cite{MajeeProhl2018}, \cite{LiuQiao2021}, \cite{HuangShen2023}, \cite{KruseWeiske2023}, and \cite{BreitProhl2024}.
In \cite{MajeeProhl2018}, the convergence properties of an implicit discretization of the stochastic Allen--Cahn equation were analyzed.
For discrete versions of the $L^\infty\trkla{0,T;L^2\trkla{\Omega;L^2\trkla{\Om}}}$-norm and the $L^2\trkla{\Omega;L^2\trkla{0,T;H^1\trkla{\Om}}}$-norm they established the optimal convergence order of $1/2-\delta$ (with arbitrary $\delta>0$) with respect to time and $1$ with respect to space.
In \cite{LiuQiao2021} convergence order $1/2$ with respect to time was shown for a drift-implicit Euler scheme and order $1$ was established for a Milstein scheme.
Tamed schemes for multiplicative noise were investigated in \cite{HuangShen2023} where convergence order $1/2$ in time was shown for the stochastic Allen--Cahn equation in one spatial dimension.
The same order of convergence was shown in \cite{KruseWeiske2023} for a BDF2-Maruyama method in three spatial dimensions.
Gradient-type multiplicative noise was investigated in \cite{FengLiZhang2017} where also convergence order $1/2$ with respect to time was established.\\

As SAV methods also provide a pathway to linear and unconditionally energy-stable schemes, they seem to be a natural candidate for the efficient treatment of stochastic PDEs.
However, although the SAV method was successfully applied in \cite{CuiHongSun2022_arxiv} to the stochastic wave equation, for most other stochastic PDEs its straightforward application is not successful.
It was for instance shown in \cite{Metzger2024} that numerical simulations of the stochastic Allen--Cahn equation performed using the SAV scheme without augmentation terms provide erroneous solutions that do not converge towards the correct limit.
These convergence problems of standard SAV schemes can be traced back to the poor temporal regularity of solutions to stochastic PDEs.
In particular, after discretizing in time the auxiliary variable $r\trkla{t\nn}$ does not coincide with $\sqrt{\iO F\trkla{\phicont\trkla{t\nn}}\dx}$ anymore.
To quantify the error, one can use computations similar to the ones presented in the proof of Lemma \ref{lem:regdisc} (see also \cite[Lemma 6.3]{Metzger2024} and \cite[Lemma 6.4]{Metzger2024_arxivb}) and show that the standard SAV update rule can be obtained as the first order terms of the Taylor expansion of $\sqrt{\iO F\trkla{\phicont\trkla{t\nn}}\dx}$ around $\phicont\trkla{t\no}$.
Hence, the error of this approximation in each time step is given as the remainder of this Taylor expansion, which roughly scales with $\tabs{\phicont\trkla{t\nn}-\phicont\trkla{t\no}}^2$.
Consequently, convergence results can be achieved if $\phicont$ is Hölder-continuous with a Hölder exponent larger than $1/2$.
As Brownian motions are only Hölder continuous with an exponent smaller than $1/2$, solutions to \eqref{eq:abstract:allencahn} will not possess the regularity required to establish convergence of the auxiliary variable.
To overcome this issue the approximation quality of the scalar auxiliary variable needs to be improved.
In \cite{Metzger2024} it was suggested by the author to add suitable linearizations of the second order terms of the Taylor expansion to the update rule leading to error terms scaling roughly with $\abs{\phicont\trkla{t\nn}-\phicont\trkla{t\no}}^{3}$ (see e.g.~Lemma \ref{lem:regdisc}).
This augmentation reduces the regularity requirements and hence allows to establish convergence even for stochastic PDEs with less regular solutions.
In particular, this discretization technique was successfully applied to stochastic Cahn--Hilliard equations with dynamic boundary conditions describing contact line tension (cf.~\cite{Metzger2024_arxivb}, where convergence along sequences was established).
Numerical simulations of the stochastic Allen--Cahn equation presented in \cite{Metzger2024} underline the practicality of the augmented SAV scheme:
While the numerical results coincide with the ones obtained using the time discretization analyzed in \cite{MajeeProhl2018}, their computation required only half of the time.
In this publication, we derive strong rates of convergence for this augmented SAV scheme for the stochastic Allen--Cahn equation and show that this linear, unconditionally stable discrete scheme enjoys the same optimal convergence rates as the implicit scheme proposed in \cite{MajeeProhl2018}.\\

The outline of the paper is as follows:
In Section \ref{sec:notation} we introduce the notation and state our assumptions on the given data.
The fully discrete scheme and the main result, which will be proven in the subsequent sections, can be found in Section \ref{sec:main}.
In Section \ref{sec:regularity}, we collect the regularity results available for solutions to the stochastic Allen--Cahn equation \eqref{eq:abstract:allencahn} and its discrete approximations.
Using these regularity results, we then quantify the approximation error in Section \ref{sec:error}.
We conclude by presenting numerical simulations underlining the practicality of our scheme in Section \ref{sec:numerics}.

\section{Notation and technical preliminaries}\label{sec:notation}
We consider the stochastic Allen--Cahn equation with periodic boundary conditions on the $d$-dimensional torus $\Om\equiv\trkla{0,1}^d\subset\mathds{R}^d$ with $d\in\tgkla{1,2,3}$.
We denote the space of $k$-times weakly differentiable periodic functions with weak derivatives in $L^p\trkla{\Om}$ by $W^{k,p}_{\per}\trkla{\Om}$.
For $p=2$, we denote the Hilbert spaces $W^{k,2}_{\per}\trkla{\Om}$ by $H^k_{\per}\trkla{\Om}$.
Given a Banach space $X$ and a set $I$, the symbol $L^p\trkla{I;X}$ ($p\in\tekla{1,\infty}$) stands for the Bochner space of strongly measurable $L^p$-integrable functions on $I$ with values in $X$.
If $X$ is only separable (and not reflexive), we shall follow the notation used in \cite[Chapter 0.3]{FeireislNovotny17} and denote the dual space of $L^{p/\trkla{p-1}}\trkla{I;X}$ by $L^p_{\weakstartop}\trkla{I;X^\prime}$.
The symbol $C^{k,\alpha}\trkla{I;X}$ with $k\in\mathds{N}_0$ and $\alpha\in(0,1]$ denotes the space of $k$-times continuously differentiable functions from $I$ to $X$ whose $k$th derivatives are Hölder continuous with Hölder exponent $\alpha$.
If $I=X=\mathds{R}$, we shall write $C^{k,\alpha}\trkla{\mathds{R}}$.\\

Concerning the discretization with respect to time, we shall assume that
\begin{itemize}
\labitem{\textbf{(T)}}{item:T} the time interval $I:=\tekla{0,T}$ is subdivided into $N$ equidistant intervals $I_n$ given by nodes $\trkla{t\nn}_{n=0,\ldots,N}$ with $t^0=0$, $t^N=T$, and $t^{n+1}-t\nn=\tau=\tfrac{T}{N}$.
\end{itemize}
The spatial discretization is based on partitions $\Th$ of $\Om$ depending on a discretization parameter $h>0$ satisfying the following assumption:
\begin{itemize}
\labitem{\textbf{(S)}}{item:S} The family $\gkla{\Th}_{h>0}$ is a quasi-uniform family of partitions of $\Om$ into disjoint, open simplices $K$ which satisfy
\begin{align*}
\overline{\Om}\equiv \bigcup_{K\in\Th} \overline{K}&&\text{with~} \max_{K\in\Th}\diam\trkla{K}\leq h\,.
\end{align*}
\end{itemize}
In our discrete scheme we will approximate the solutions using continuous, piecewise linear finite element functions on $\Th$.
The corresponding function space will be denoted by $\Uh$ and is spanned by functions $\tgkla{\chi_{h,k}}_{k=1,\ldots,\dim\Uh}$ forming a dual basis to the vertices $\tgkla{\bs{x}_{h,k}}_{k=1,\ldots,\dim\Uh}$.
The nodal interpolation operators $\Ihop$ mapping $C^0_{\per}\trkla{\overline{\Om}}$ onto $\Uh$ is defined by
\begin{align}
\Ih{a}:=\sum_{k=1}^{\dim\Uh}a\trkla{\bs{x}_{h,k}}\chi_{h,k}\,.
\end{align}
This interpolation operator allows to define equivalent versions of the standard $L^p\trkla{\Om}$-norms.
In particular, we have
\begin{align}\label{eq:normequivalence}
c\norm{\zeta\h}_{L^p\trkla{\Om}}\leq \rkla{\iO\Ih{\abs{\zeta\h}^p}\dx}^{1/p}\leq C\norm{\zeta\h}_{L^p\trkla{\Om}}
\end{align}
for $p\in[1,\infty)$ and $\zeta\h\in\Uh$ with constants $c,C>0$ independent of $h$.
This estimate follows from a combination of Jensen's inequality and an inverse inequality (see e.g.~\cite[Lemma 3.2.11]{SieberDiss2021}).
For future reference, we recall the following error estimates for $\Ihop$:
\begin{lemma}\label{lem:Ihestimates}
Let $\Th$ satisfy assumption \ref{item:S} and let $p\in[1,\infty)$ and $1\leq q\leq\infty$.
Then, for $q^*=\tfrac{q}{q-1}$ (with the obvious modifications for $q=1$ and $q=\infty$) the estimates
\begin{subequations}
\begin{align}
\norm{\trkla{\ids-\Ihop}\gkla{f\h g\h}}_{L^p\trkla{\Om}}&\leq Ch^2\norm{\nabla f\h}_{L^{pq}\trkla{\Om}}\norm{\nabla g\h}_{L^{pq^*}\trkla{\Om}}\,,\\
\norm{\trkla{\ids-\Ihop}\gkla{f\h g\h}}_{W^{1,p}\trkla{\Om}}&\leq Ch\norm{\nabla f\h}_{L^{pq}\trkla{\Om}}\norm{\nabla g\h}_{L^{pq^*}\trkla{\Om}}
\end{align}
\end{subequations}
hold true for all $f\h,g\h\in\Uh$.
\end{lemma}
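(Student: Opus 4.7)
The plan is to reduce both estimates to a local, element-wise analysis. Since $(\mathcal{I}-\Ihop)\{f\h g\h\}$ vanishes at every vertex of $\Th$, it makes sense to work simplex by simplex and then sum the contributions. On a fixed $K\in\Th$, the restrictions of $f\h$ and $g\h$ are affine, so $f\h g\h$ is a quadratic polynomial on $K$ and in particular $W^{2,\infty}\trkla{K}$. I would therefore invoke the standard local interpolation error estimates (obtained from a Bramble--Hilbert argument on the reference simplex combined with the quasi-uniformity hypothesis \ref{item:S}),
\begin{align*}
\norm{w-\Ih{w}}_{L^p\trkla{K}}&\leq Ch^2\norm{D^2 w}_{L^p\trkla{K}},\\
\norm{w-\Ih{w}}_{W^{1,p}\trkla{K}}&\leq Ch\norm{D^2 w}_{L^p\trkla{K}},
\end{align*}
applied to $w=f\h g\h$ on each $K$.

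The second step is to bound $D^2\trkla{f\h g\h}$ pointwise. Because $f\h$ and $g\h$ are affine on $K$, both $D^2 f\h$ and $D^2 g\h$ vanish on $K$, leaving only the cross terms
\begin{align*}
D^2\trkla{f\h g\h}=\nabla f\h\otimes\nabla g\h+\nabla g\h\otimes\nabla f\h\qquad\text{on }K.
\end{align*}
Consequently $\abs{D^2\trkla{f\h g\h}}\leq 2\abs{\nabla f\h}\abs{\nabla g\h}$ pointwise on each element, and inserting this into the local estimates above yields
\begin{align*}
\norm{(\ids-\Ihop)\gkla{f\h g\h}}_{L^p\trkla{K}}^p&\leq C h^{2p}\int_K\abs{\nabla f\h}^p\abs{\nabla g\h}^p\dx,
\end{align*}
with the analogous bound at the $W^{1,p}\trkla{K}$-level featuring $h^p$ instead of $h^{2p}$.

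The third step is simply to sum over $K\in\Th$ and apply Hölder's inequality in $L^p\trkla{\Om}$ with conjugate exponents $q$ and $q^*$ satisfying $1/q+1/q^*=1$:
\begin{align*}
\int_\Om \abs{\nabla f\h}^p\abs{\nabla g\h}^p\dx\leq \rkla{\int_\Om\abs{\nabla f\h}^{pq}\dx}^{1/q}\rkla{\int_\Om\abs{\nabla g\h}^{pq^*}\dx}^{1/q^*}.
\end{align*}
Taking the $p$-th root in each inequality and combining with the element-wise bounds gives the stated estimates. The degenerate cases $q=1$ (so $q^*=\infty$) and $q=\infty$ are handled with the usual essential supremum modification of Hölder's inequality. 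There is no real obstacle here; the only point that requires mild care is the passage from the reference element to $K$, where quasi-uniformity of $\gkla{\Th}_{h>0}$ ensures that the scaling constants can be absorbed into a single $h$-independent $C$.
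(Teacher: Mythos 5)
Your argument is correct and is essentially the proof the paper relies on: the paper itself only cites \cite[Lemma 2.1]{Metzger2020} for this statement, and the proof there proceeds exactly as you describe, via local interpolation estimates on each simplex, the observation that $D^2\trkla{f\h g\h}$ reduces to the cross terms $\nabla f\h\otimes\nabla g\h+\nabla g\h\otimes\nabla f\h$ because $f\h,g\h$ are affine on each element, and an elementwise summation followed by H\"older's inequality with exponents $q$ and $q^*$. No gaps; the only point worth stating explicitly is that $(\ids-\Ihop)\gkla{f\h g\h}$ is continuous and piecewise polynomial, so the broken $W^{1,p}$ seminorms indeed sum to the global one.
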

For the proof we refer e.g.~to \cite[Lemma 2.1]{Metzger2020}.
The $L^2$-projection from $L^2\trkla{\Om}$ to $\Uh$ will be denoted by $\projop$.
This projection is stable with respect to the full $H^1$-norm and the $H^1$-seminorm and satisfies the error estimates
\begin{subequations}\label{eq:error:proj}
\begin{align}
\norm{f-\proj{f}}_{L^2\trkla{\Om}}&\leq Ch\norm{f}_{H^1\trkla{\Om}}\,,\\
\norm{g-\proj{g}}_{L^2\trkla{\Om}}+h\norm{\nabla\trkla{g-\proj{g}}}_{L^2\trkla{\Om}}&\leq Ch^2\norm{\Delta g}_{L^2\trkla{\Om}}
\end{align}
\end{subequations}
for all $f\in H^1_\per\trkla{\Om}$ and $g\in H^2_{\per}\trkla{\Om}$ (cf.~\cite{Ern2004}).
To approximate the Laplacian we define the discrete Laplacian $\Delta\h\,:\Uh\rightarrow \Uh$ via
\begin{align}\label{eq:defdisclap}
\iO\Ih{\Delta\h\zeta\h\psi\h}\dx=-\iO\nabla \zeta\h\cdot\nabla\psi\h\dx&&\text{for~}\zeta\h,\psi\h\in\Uh\,.
\end{align}
For simplicity, we assume that the continuous and discrete initial data are deterministic and satisfy
\begin{itemize}
\labitem{\textbf{(I)}}{item:I} $\phicont_0\in H^2_{\per}\trkla{\Om}$ and $\phi\h^0:=\Ih{\phicont_0}\in\Uh$. 
\end{itemize}
We shall now collect our assumptions on the stochastic source term on the right-hand side of \eqref{eq:abstract:allencahn}, which is governed by a $\mathcal{Q}$-Wiener process defined on a filtered probability space $\trkla{\Omega,\mathcal{A},\mathcal{F},\Prob}$ with a complete right-continuous filtration $\mathcal{F}$ and an operator $\Phi$ mapping into the space of Hilbert-Schmidt operators.
We shall assume that
\begin{itemize}
\labitem{\textbf{(W1)}}{item:W1} $\mathcal{Q}\,:\,L^2\trkla{\Om}\rightarrow L^2\trkla{\Om}$ is a trace class operator with the representation
\begin{align}
\mathcal{Q}g:=\sum_{k\in\mathds{Z}} \lambda_k^2\trkla{g,\g{k}}_{L^2\trkla{\Om}}\g{k}&&\text{for~} g\in L^2\trkla{\Om}\,,
\end{align}
where $\trkla{.,.}_{L^2\trkla{\Om}}$ denotes the $L^2$-inner product, $\trkla{\lambda_k}_{k\in\mathds{Z}}$ are given positive real numbers and $\trkla{\g{k}}_{k\in\mathds{Z}}$ is an orthonormal basis of $L^2\trkla{\Om}$ consisting of smooth periodic functions.
\end{itemize}
Hence, $\mathcal{Q}^{1/2}$, which is given by $\mathcal{Q}^{1/2} g:= \sum_{k\in\mathds{Z}}\lambda_k\trkla{g,\g{k}}_{L^2\trkla{\Om}}\g{k}$, is a Hilbert--Schmidt operator from $L^2\trkla{\Om}$ to $L^2\trkla{\Om}$.
We shall denote its image of $L^2\trkla{\Om}$ by
\begin{align}
\mathcal{Q}^{1/2}L^2\trkla{\Om}:=\gkla{\mathcal{Q}^{1/2}g\,:\,g\in L^2\trkla{\Om}}\,.
\end{align}
Thus, using a family $\trkla{\beta_k}_{k\in\mathds{Z}}$ of mutually independent Brownian motions, we can write the $\mathcal{Q}$-Wiener process $\trkla{W\trkla{t}}_{t\in\tekla{0,T}}$ as
\begin{align}
W\trkla{t}:=\sum_{k\in\mathds{Z}}\lambda_k\g{k}\beta_k\trkla{t}\,.
\end{align}
Furthermore, we shall assume that the $\mathcal{Q}$-Wiener process is colored in the sense that
\begin{itemize}
\labitem{\textbf{(W2)}}{item:W2} there exists a positive constant $\widehat{C}$ such that 
\begin{align}
\sum_{k\in\mathds{Z}}\lambda_k^2\norm{\g{k}}_{W^{2,\infty}\trkla{\Om}}^2\leq \widehat{C}\,.
\end{align}
\end{itemize}
The operator $\Phi$ maps a stochastic process $\phicont$ into the space of Hilbert--Schmidt operators from $\mathcal{Q}^{1/2}L^2\trkla{\Om}$ to $L^2\trkla{\Om}$ and is defined via
\begin{align}
\Phi\trkla{\phicont}g:=\varrho\trkla{\phicont}\sum_{k\in\mathds{Z}}\trkla{g,\g{k}}_{L^2\trkla{\Om}}\g{k}
\end{align}
for all $g\in\mathcal{Q}^{1/2}L^2\trkla{\Om}$.
Here, the coefficient function $\varrho\,:\,\mathds{R}\rightarrow\mathds{R}$ satisfies assumptions similar to the ones used in \cite{MajeeProhl2018} and \cite{BreitProhl2024}.
In particular, we shall assume
\begin{itemize}
\labitem{\textbf{(C)}}{item:C} $\varrho\in W^{2,\infty}\trkla{\mathds{R}}\cap C^2\trkla{\mathds{R}}$.
\end{itemize}
This assumption also implies that $\varrho\in C^{0,1}\trkla{\mathds{R}}$ and consequently allows for the estimate $\norm{\nabla\Ih{\varrho\trkla{\zeta\h}}}_{L^p\trkla{\Om}}\leq \overline{C}\norm{\nabla\zeta\h}_{L^p\trkla{\Om}}$ for $\zeta\h\in \Uh$ and $p\in\tekla{1,\infty}$ with a constant $\overline{C}$ depending on the Lipschitz constant of $\varrho$ but not on $h$.
In our discrete scheme, we shall only consider a finite number of modes.
Hence, we will approximate $\Phi$ by a discrete version $\Phi\h$ which is defined via
\begin{align}\label{eq:def:Phih}
\Phi\h\trkla{\zeta\h} f:=\sum_{k\in\mathds{Z}\h}\Ih{\varrho\trkla{\zeta\h}\trkla{f,\g{k}}_{L^2\trkla{\Om}}\g{k}}\in\Uh
\end{align}
for all $f\in\mathcal{Q}^{1/2}L^2\trkla{\Om}$ and $\zeta\h\in\Uh$.
Here, $\mathds{Z}\h$ is an $h$-dependent finite subset of $\mathds{Z}$ satisfying 
\begin{itemize}
\labitem{\textbf{(Z)}}{item:Z} $\sum_{k\in\mathds{Z}\setminus\mathds{Z}\h}\norm{\lambda_k\g{k}}_{L^2\trkla{\Om}}^2\leq C h^2$.
\end{itemize}

\section{Discrete scheme and main result}\label{sec:main}
In this section, we introduce our numerical scheme and formulate our main convergence result which shall be proven in the subsequent sections.
To simplify the notation, we set $\varepsilon=1$ and start from the following version of the stochastic Allen--Cahn equation:\\
Find a stochastic process $\phicont$ satisfying
\begin{align}\label{eq:allencahn}
\phicont\trkla{t} + \int_0^t\rkla{-\Delta\phicont + F^\prime\trkla{\phicont}}\ds = \phicont_0 +\int_0^t \Phi\trkla{\phicont}\dW&&\text{in~}L^2_{\per}\trkla{\Om}
\end{align}
$\Prob$-almost surely for all $t\in\tekla{0,T}$.
The existence of stochastically strong, pathwise unique solutions to \eqref{eq:allencahn} is well-known (see e.g.~\cite[Theorem 2.1]{BreitProhl2024}). 
The relevant regularity results for these solutions will be collected in Section \ref{sec:regularity} below.

To approximate \eqref{eq:allencahn} numerically we shall use the following fully discrete finite element scheme which is based on an augmented version of the scalar auxiliary variable method proposed in \cite{Metzger2024}:\\
Given $\Uh\times\mathds{R}$-valued random variables $\trkla{\phi\h\no,r\h\no}$ find $\Uh\times\mathds{R}$-valued random variables $\trkla{\phi\h\nn,r\h\nn}$ which satisfy pathwise
\begin{subequations}\label{eq:discscheme}
\begin{align}\label{eq:discscheme:phi}
\begin{split}
\iO&\Ih{\trkla{\phi\h\nn-\phi\h\no}\psi\h}\dx +\tau\iO\nabla\phi\h\nn\cdot\nabla\psi\h\dx \\
&+\tau\frac{r\h\nn}{\sqrt{E\h\trkla{\phi\h\no}}}\iO\Ih{F^\prime\trkla{\phi\h\no}\psi\h}\dx\\
&-\left[\tau\frac{r\h\nn}{4\ekla{E\h\trkla{\phi\h\no}}^{3/2}}\iO\Ih{F^\prime\trkla{\phi\h\no}\Phi\h\trkla{\phi\h\no}\sinc{n}}\dx\iO\Ih{F^\prime\trkla{\phi\h\no}\psi\h}\dx\right.\\
&\qquad\left.-\tau\frac{r\h\nn}{2\sqrt{E\h\trkla{\phi\h\no}}}\iO\Ih{F^{\prime\prime}\trkla{\phi\h\no}\Phi\h\trkla{\phi\h\no}\sinc{n}\psi\h}\dx\right]\\
 =& \iO\Ih{\Phi\h\trkla{\phi\h\no}\sinc{n}\psi\h}\dx
\end{split}
\end{align}
for all $\psi\h\in\Uh$ and
\begin{align}\label{eq:discscheme:r}
\begin{split}
r\h\nn-r\h\no =&\, \frac{1}{2\sqrt{E\h\trkla{\phi\h\no}}}\iO\Ih{F^\prime\trkla{\phi\h\no}\trkla{\phi\h\nn-\phi\h\no}}\dx\\
&\,+\left[-\frac{1}{8\ekla{E\h\trkla{\phi\h\no}}^{3/2}}\iO\Ih{F^\prime\trkla{\phi\h\no}\Phi\h\trkla{\phi\h\no}\sinc{n}}\dx\right.\\
&\qquad\qquad\times\iO\Ih{F^\prime\trkla{\phi\h\no}\trkla{\phi\h\nn-\phi\h\no}}\dx\\
&\left.+\frac1{4\sqrt{E\h\trkla{\phi\h\no}}}\iO\Ih{F^{\prime\prime}\trkla{\phi\h\no}\trkla{\phi\h\nn-\phi\h\no}\Phi\h\trkla{\phi\h\no}\sinc{n}}\dx\right]\,.
\end{split}
\end{align}
\end{subequations}
Here, the abbreviations $\sinc{n}:=W\trkla{t\nn}-W\trkla{t\no}$ and $E\h\trkla{\zeta\h}:=\iO\Ih{F\trkla{\zeta\h}}\dx$ were used.
Due to the choice $F\trkla{\varphi}:=\tfrac14\trkla{\varphi^2-1}^2+\gamma$, we immediately obtain $E\h\trkla{\cdot}\geq \gamma>0$.
Following assumption \ref{item:I}, the discrete initial data $\phi\h^0$ is chosen as $\Ih{\phicont_0}$. 
For the scalar auxiliary variable, we use $r\h^0:=\sqrt{E\h\trkla{\phi\h^0}}$.\\
In comparison to the standard SAV method, the update formula \eqref{eq:discscheme:r} for $r\h\nn$ in the augmented SAV scheme is enhanced by the two additional terms in the square brackets. 
These additional terms can be interpreted as suitable linearizations of the second order terms of the Taylor expansion of $\sqrt{E\h\trkla{\phi\h\nn}}$ around $\phi\h\no$.
As shown in Lemma \ref{lem:regdisc} (see also \cite[Lemma 6.3]{Metzger2024}) these augmentation terms ensure the correct convergence behavior of $r\h\nn$, if the solution is not Hölder continuous with exponent larger than $1/2$.
To guarantee stability of the scheme, it is necessary to also add counterparts of these augmentation terms in \eqref{eq:discscheme:phi}.\\
To simplify the notation, we shall denote the additional terms in \eqref{eq:discscheme:phi}, which were added to the standard SAV method, by
\begin{align}
\begin{split}
\Xi\h\nn:=&\,-\frac{r\h\nn}{4\ekla{E\h\trkla{\phi\h\no}}^{3/2}}\iO\Ih{F^\prime\trkla{\phi\h\no}\Phi\h\trkla{\phi\h\no}\sinc{n}}\dx\Ih{F^\prime\trkla{\phi\h\no}}\\
&\,+\frac{r\h\nn}{2\sqrt{E\h\trkla{\phi\h\no}}}\Ih{F^{\prime\prime}\trkla{\phi\h\no}\Phi\h\trkla{\phi\h\no}\sinc{n}}\in\Uh\,.
\end{split}
\end{align}
Using this abbreviation, we can rewrite \eqref{eq:discscheme:phi} as
\begin{multline}\label{eq:discschemesimple}
\phi\h\nn +\int_{t\no}^{t\nn}\rkla{-\Delta\h\phi\h\nn +\frac{r\h\nn}{\sqrt{E\h\trkla{\phi\h\no}}}\Ih{F^\prime\trkla{\phi\h\no}} +\Xi\h\nn}\dt\\=\phi\h\no +\int_{t\no}^{t\nn}\Phi\h\trkla{\phi\h\no}\dW\,.
\end{multline}
We want to emphasize that due to \eqref{eq:def:Phih} all terms in \eqref{eq:discschemesimple} are finite element functions contained in $\Uh$.\\
We are now in the position to state an error estimate for solutions to the augmented SAV scheme \eqref{eq:discscheme} which is the main result of this work:
\begin{theorem}\label{thm:error}
Let the assumptions \ref{item:S}, \ref{item:T}, \ref{item:C}, \ref{item:I}, \ref{item:W1}, \ref{item:W2}, and \ref{item:Z} hold true.
Then, for every $\delta>0$, there exists a constant $0<C_\delta<\infty$ which is independent of the discretization parameters $\tau$ and $h$ such that for all $\tau$ sufficiently small the estimate
\begin{align*}
\max_{0\leq M \leq N}\expected{\norm{\phicont\trkla{t^M}-\phi\h^M}_{L^2\trkla{\Om}}^2}+\tau\sum_{n=1}^N\expected{\norm{\nabla\phicont\trkla{t\nn}-\nabla\phi\h\nn}_{L^2\trkla{\Om}}^2}\leq C_\delta\trkla{\tau^{1-\delta}+h^2}\,,
\end{align*}
holds true.
Here, $\phicont$ is the solution process to \eqref{eq:allencahn} and $\trkla{\phi\h\nn}_n$ solves \eqref{eq:discscheme}.
\end{theorem}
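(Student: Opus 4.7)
The plan is to derive a discrete error equation by projecting \eqref{eq:allencahn} onto $\Uh$ at the times $t\nn$, subtracting \eqref{eq:discschemesimple}, testing with the current error, and closing via a stochastic discrete Gronwall inequality. I would split the total error as $\phicont\trkla{t\nn}-\phi\h\nn=\eta^n-\err^n$, where $\eta^n:=\phicont\trkla{t\nn}-\proj{\phicont\trkla{t\nn}}$ is the finite-element projection residue of size $h^2$ by \eqref{eq:error:proj}, and $\err^n:=\phi\h\nn-\proj{\phicont\trkla{t\nn}}\in\Uh$ is the quantity estimated by dynamic means. In parallel I would track an auxiliary-variable error $\mathfrak{r}^n:=r\h\nn-\sqrt{E\h\trkla{\proj{\phicont\trkla{t\nn}}}}$, whose evolution follows from \eqref{eq:discscheme:r}.

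Testing the error equation for $\err^n-\err^{n-1}$ with $\err^n\in\Uh$ and using $2\trkla{a-b}a=a^2-b^2+\trkla{a-b}^2$ produces a telescoping identity for $\norm{\err^n}_{L^2\trkla{\Om}}^2$ along with a coercive $\tau\norm{\nabla\err^n}_{L^2\trkla{\Om}}^2$ term on the left. The most delicate deterministic contribution is the SAV-nonlinearity mismatch
\begin{align*}
\tau\rkla{F^\prime\trkla{\phicont\trkla{t\nn}}-\frac{r\h\nn}{\sqrt{E\h\trkla{\phi\h\no}}}\Ih{F^\prime\trkla{\phi\h\no}}}\,,
\end{align*}
which I would decompose into (i) a multiplier error $\tfrac{r\h\nn}{\sqrt{E\h\trkla{\phi\h\no}}}-1$, controlled pathwise by Lemma \ref{lem:regdisc}; (ii) a Lipschitz residue $F^\prime\trkla{\phicont\trkla{t\nn}}-F^\prime\trkla{\phi\h\no}$, absorbed into $\err^{n-1}$ and a temporal regularity remainder of order $\tau^{1-\delta}$ provided by Section \ref{sec:regularity}; and (iii) interpolation residues from $\Ihop$ and $\projop$ of order $h^2$ by Lemma \ref{lem:Ihestimates} and \eqref{eq:error:proj}. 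The polynomial growth of $F^{\prime\prime}$ is handled by absorbing high-order $L^p$-moments of both $\phi\h\no$ and $\phicont\trkla{t\nn}$ from the a-priori estimates.

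The stochastic increment $\int_{t\no}^{t\nn}\trkla{\Phi\trkla{\phicont}-\Phi\h\trkla{\phi\h\no}}\dW$ is split into $\Phi\trkla{\phicont\trkla{s}}-\Phi\trkla{\phi\h\no}$ and $\Phi\trkla{\phi\h\no}-\Phi\h\trkla{\phi\h\no}$. The first summand yields, via Itô isometry and \ref{item:C}, a contribution $C\tau\expected{\norm{\err^{n-1}}_{L^2\trkla{\Om}}^2}$ plus a temporal residue of order $\tau^{2-\delta}$, while the second is $O\trkla{\tau h^2}$ by \ref{item:Z}. Crucially, the quadratic-in-noise terms produced when testing $\Phi\h\trkla{\phi\h\no}\sinc{n}$ against $\err^n$ generate Itô-type corrections that are exactly balanced in expectation by the augmentation term $\Xi\h\nn$, up to a higher-order remainder again governed by Lemma \ref{lem:regdisc}. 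Summing over $n$, applying Burkholder-Davis-Gundy to bring the maximum inside the expectation of the martingale part, invoking the a-priori moment bounds from Section \ref{sec:regularity}, and applying the discrete Gronwall inequality close the estimate and deliver the claimed rate.

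The main obstacle I anticipate is the simultaneous control of the SAV-multiplier $r\h\nn/\sqrt{E\h\trkla{\phi\h\no}}$ and the exact cancellation of the Itô correction by $\Xi\h\nn$. Without the augmentation in \eqref{eq:discscheme:r} the multiplicative noise leaves a drift residue of order $\tau$ that, paired with the sub-$1/2$ Hölder regularity of Brownian paths, degrades the rate strictly below $1/2$; the role of the augmentation is to ensure that $\mathfrak{r}^n$ carries an improved pathwise bound of order $\tau^{3/2-\delta}$ via Lemma \ref{lem:regdisc}, which then transfers through the error equation without breaking the $\tau^{1-\delta}$ target. Making this cancellation rigorous while uniformly bounding the nonlinear prefactors of $\Xi\h\nn$ in sufficiently high $L^p\trkla{\Omega}$-moments is where the bulk of the technical work lies.
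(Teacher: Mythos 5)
Your overall architecture --- error equation, testing with the error, splitting the drift mismatch into a multiplier error, consistency errors and interpolation residues, It\^{o} isometry for the noise, discrete Gronwall --- matches the paper's proof, and your items (i) and (iii) as well as the splitting of the stochastic increment are handled essentially as you describe. There are, however, two places where your plan as stated would not go through.

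First, the cubic nonlinearity. You propose to treat $F^\prime\trkla{\phicont\trkla{t\nn}}-F^\prime\trkla{\phi\h\no}$ as a ``Lipschitz residue absorbed into $\err\no$'' and to handle the polynomial growth by ``absorbing high-order $L^p$-moments'' from the a priori bounds. Since $F^\prime\trkla{a}-F^\prime\trkla{b}=\trkla{a^2+ab+b^2-1}\trkla{a-b}$, this produces a Gronwall contribution of the form $\expected{\trkla{\norm{\phicont\nn}_{H^1\trkla{\Om}}^2+\norm{\phi\h\nn}_{H^1\trkla{\Om}}^2}\norm{\err\nn}_{L^2\trkla{\Om}}\norm{\err\nn}_{H^1\trkla{\Om}}}$ with a \emph{random} prefactor; H\"older's inequality in $\Omega$ converts this into a higher moment of the error, which is not available, so the discrete Gronwall inequality in expectation does not close. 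The paper avoids this by a monotone decomposition: the dangerous cubic difference is tested against the exact error and dropped by sign, $\iO\rkla{-\trkla{\phicont\nn}^3+\trkla{\phi\h\nn}^3}\rkla{\phicont\nn-\phi\h\nn}\dx\leq0$, while every remaining piece pairs the random $H^1$-coefficients with quantities possessing independent moment bounds (the projection error of size $h^2$, the time-consistency error $\norm{\phicont\nn-\phicont}_{L^2\trkla{\Om}}^2\lesssim\tau$, the discrete increment $\norm{\phi\h\nn-\phi\h\no}_{L^2\trkla{\Om}}^2$, and the nodal-interpolation error), so that only the harmless linear part $\iO\err\nn\proj{\err\nn}\dx\leq\norm{\err\nn}_{L^2\trkla{\Om}}^2$ enters Gronwall with a deterministic constant. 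Without this sign argument your step (ii) fails.

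Second, the augmentation term. You anticipate that the It\^{o}-type corrections generated by the noise are ``exactly balanced in expectation by $\Xi\h\nn$''. No such cancellation occurs, and none is needed: the martingale term is handled by adding and subtracting the $\mathcal{F}_{t\no}$-measurable test function $\proj{\err\no}$, whose contribution has vanishing expectation, and estimating the remainder via It\^{o}'s isometry against $\tfrac14\norm{\proj{\err\nn-\err\no}}_{L^2\trkla{\Om}}^2$, which is absorbed on the left; $\Xi\h\nn$ is a pure nuisance term estimated directly through the bound $\tau^{-\p}\expected{\rkla{\sum_{n}\tau\norm{\Xi\h\nn}_{L^2\trkla{\Om}}^2}^{\p}}\leq C$ of Lemma \ref{lem:regdisc}, contributing $C\tau$. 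The role of the augmentation is confined to the update rule for $r\h\nn$, where it yields \eqref{eq:errorr}; insisting on an exact cancellation would lead you astray. Two minor remarks: the theorem asserts $\max_M\expected{\cdot}$ rather than $\expected{\max_M\cdot}$, so Burkholder--Davis--Gundy is not required; and tracking $r\h\nn-\sqrt{E\h\trkla{\proj{\phicont\trkla{t\nn}}}}$ is superfluous, since only the purely discrete consistency $r\h\nn-\sqrt{E\h\trkla{\phi\h\no}}$, already provided by Lemma \ref{lem:regdisc}, enters the error equation.
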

In the next section, we collect the necessary regularity results for $\phicont$ and $\trkla{\phi\h\nn}_n$. 
The proof of Theorem \ref{thm:error} can be found in Section \ref{sec:error}.
\section{Regularity of solutions to the continuous and discrete problem}\label{sec:regularity}
In this section, we collect the required regularity results for solutions to the continuous and discrete versions of the stochastic Allen--Cahn equation.
As shown e.g.~in \cite[Theorem 2.1 and Lemma 2.2]{BreitProhl2024}, \cite[Theorem 4.2]{Metzger2024}, and \cite[Lemma 3.2]{MajeeProhl2018}, solutions to \eqref{eq:allencahn} possess the following regularity:
\begin{lemma}\label{lem:regu}
Let $W$ be a Wiener process defined on $\trkla{\Omega,\mathcal{A},\mathcal{F},\Prob}$ satisfying \ref{item:W1} and \ref{item:W2}.
Furthermore, let the assumptions \ref{item:C} and \ref{item:I} hold true.
Then, there exists a unique, progressively $\mathcal{F}$-measurable process
\begin{align}
\begin{split}\label{eq:regu:basic}
\phicont\in L^{2\p}_{\weakstartop}&\,\trkla{\Omega;L^\infty\trkla{0,T;H^1_\per\trkla{\Om}}}\cap L^{2\p}\trkla{\Omega; C^{0,\trkla{\p-1}/\trkla{2\p}}\trkla{\tekla{0,T};L^2\trkla{\Om}}}\\
&\cap L^{2\p}\trkla{\Omega;L^2\trkla{0,T;H^2_{\per}\trkla{\Om}}}
\end{split}
\end{align}
for any $\p\in\trkla{1,\infty}$, which satisfies
\begin{align}\label{eq:existenceu}
\phicont\trkla{t}-\phicont_0+\int_0^t\trkla{-\Delta\phicont+F^\prime\trkla{\phicont}}\ds=\int_0^t\Phi\trkla{\phicont}\dW
\end{align}
$\Prob$-almost surely in $L^2\trkla{\Om}$ for all $t\in\tekla{0,T}$.\\
In addition, this process satisfies
\begin{subequations}
\begin{align}\label{eq:regu:improved}
\expected{\sup_{0\leq s\leq T}\norm{\nabla^2\phicont\trkla{s}}_{L^2\trkla{\Om}}^2}&\leq C\,,\\
\expected{\norm{\phicont\trkla{t}-\phicont\trkla{s}}_{H^1\trkla{\Om}}^2}&\leq C\abs{t-s}&\text{for all~}s,t\in\tekla{0,t}\,.\label{eq:regu:hoelder}
\end{align}
\end{subequations}
\end{lemma}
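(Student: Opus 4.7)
The plan is to follow a standard variational / Galerkin approach for monotone-type SPDEs. Since $F'(u) = u^3 - u$ satisfies the one-sided Lipschitz condition $(F'(u)-F'(v))(u-v) \geq -|u-v|^2$ (because $u^3$ is monotone and $-u$ is a globally Lipschitz perturbation), and $\Phi$ is globally Lipschitz from $L^2(\Om)$ into the Hilbert--Schmidt operators (by \ref{item:C} together with \ref{item:W1}--\ref{item:W2}), the equation fits the framework of Liu--Röckner / Krylov--Rozovskii. I would use spatial Galerkin approximation with eigenfunctions $\{e_k\}$ of the periodic Laplacian: for each $N$, the projected SDE on $V_N := \mathrm{span}(e_1,\dots,e_N)$ has locally Lipschitz coefficients with linear growth in the monotone part, so classical finite-dimensional SDE theory yields a unique strong solution $\phicont^N$.

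The central a priori bounds come from Itô's formula applied to $\mathcal{E}(\phicont^N)$ and to $\|\phicont^N\|_{L^2}^{2\mathfrak{p}}$. Using $F \geq 0$, the monotone structure of $-\Delta + F'$, and assumptions \ref{item:W1}--\ref{item:W2}, together with Burkholder--Davis--Gundy, one obtains
\begin{equation*}
\expected{\sup_{t\in[0,T]}\mathcal{E}(\phicont^N(t))^{\mathfrak{p}}} + \expected{\Bigl(\int_0^T\!\!\!\iO |{-\Delta \phicont^N + F'(\phicont^N)}|^2\dx\dt\Bigr)^{\mathfrak{p}}} \leq C,
\end{equation*}
uniformly in $N$, which delivers the $L^{2\mathfrak{p}}_{\weakstartop}(\Omega;L^\infty(0,T;H^1_\per))\cap L^{2\mathfrak{p}}(\Omega;L^2(0,T;H^2_\per))$ bound. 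To pass to the limit, I would combine the uniform bound on the $L^2(0,T;H^2_\per)$-norm with a fractional-in-time estimate obtained from the equation itself (the stochastic integral contributes a $|t-s|^{1/2-\epsilon}$ term via BDG) to invoke an Aubin--Lions / stochastic compactness result; strong convergence of $\phicont^N$ in $L^2(\Omega\times(0,T)\times\Om)$ then lets one identify $F'(\phicont^N)\weak F'(\phicont)$, using the polynomial growth of $F'$ and the higher-moment bounds to secure equi-integrability. Uniqueness follows by writing the equation for the difference of two solutions, applying Itô to $\|\phicont_1 - \phicont_2\|_{L^2}^2$, absorbing the nonlinearity via the one-sided Lipschitz estimate, and concluding by Gronwall.

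For the Hölder-type bound \eqref{eq:regu:hoelder}, I would use the equation \eqref{eq:existenceu} directly: the deterministic increments contribute $|t-s| \int_s^t (\|\Delta\phicont\|_{L^2}^2 + \|F'(\phicont)\|_{L^2}^2)\dr$, which has finite expectation by the $L^2(0,T;H^2)$ bound and the polynomial growth of $F'$ controlled by $H^1 \hookrightarrow L^6$; the stochastic integral contributes $\expected{\|\int_s^t\Phi(\phicont)\dW\|_{L^2}^2} = \expected{\int_s^t \|\Phi(\phicont)\|_{L_2(\mathcal{Q}^{1/2}L^2,L^2)}^2 \dr} \leq C|t-s|$ by the Itô isometry and \ref{item:W1}--\ref{item:W2}. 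Summing yields the $H^1$-in-space variant after also testing with $-\Delta(\phicont(t)-\phicont(s))$ and using the $H^2$ regularity; the Hölder exponent $(\mathfrak{p}-1)/(2\mathfrak{p})$ in \eqref{eq:regu:basic} then comes from Kolmogorov's continuity theorem applied with the $2\mathfrak{p}$-th moment version of the same estimate.

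The main obstacle is the pathwise bound \eqref{eq:regu:improved}, since the $L^\infty_t L^2$ estimate on $\nabla^2\phicont$ cannot be read off from the energy identity. I would obtain it by applying Itô's formula to $\|\Delta \phicont^N\|_{L^2}^2$ at the Galerkin level: the drift produces $-2\|\nabla\Delta\phicont^N\|_{L^2}^2$ and the cross term $-2(\Delta\phicont^N,\Delta F'(\phicont^N))_{L^2} = 2(\nabla\Delta\phicont^N,\nabla F'(\phicont^N))_{L^2}$, which after Young's inequality and Gagliardo--Nirenberg (exploiting $d\leq 3$ and the $H^1$-bound on $\phicont^N$) can be absorbed into the gradient term up to a lower-order remainder, while the Itô correction $\sum_k \lambda_k^2 \|\Delta(\varrho(\phicont^N)\g{k})\|_{L^2}^2$ is controlled by \ref{item:W2} together with the $H^1$-bound and $\varrho\in W^{2,\infty}$. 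Applying BDG to the martingale part and Gronwall then gives the uniform estimate, which passes to the limit by lower semicontinuity. This absorption step, and verifying that the Gagliardo--Nirenberg-type cubic estimate does not close too poorly in $d=3$, is where the argument is most delicate.
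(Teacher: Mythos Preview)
The paper does not actually prove Lemma~\ref{lem:regu}; it simply cites \cite[Theorem~2.1 and Lemma~2.2]{BreitProhl2024}, \cite[Theorem~4.2]{Metzger2024}, and \cite[Lemma~3.2]{MajeeProhl2018} for all of the stated regularity. Your sketch is therefore not in competition with a proof in the paper but rather a reconstruction of what those references do, and as such it is broadly correct: the Galerkin/monotone-SPDE framework, the energy estimate via It\^{o}'s formula on $\mathcal{E}(\phicont^N)$, BDG for the $L^{2\p}$ moments, and It\^{o} on $\norm{\Delta\phicont^N}_{L^2}^2$ for the $L^\infty_tH^2$ bound are exactly the ingredients used in the cited works.

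One point of ordering is worth flagging. Your argument for the $H^1$-H\"older estimate \eqref{eq:regu:hoelder} relies on bounding $\norm{\nabla\int_s^t\trkla{\Delta\phicont-F'(\phicont)}\dr}_{L^2}^2$, which after Cauchy--Schwarz in time requires $\nabla\Delta\phicont\in L^2\trkla{\Omega;L^2\trkla{0,T;L^2\trkla{\Om}}}$, i.e.\ $L^2_tH^3$ regularity. This is not contained in \eqref{eq:regu:basic} or \eqref{eq:regu:improved} as stated, but it \emph{is} produced as the dissipation term in the very It\^{o} computation you outline for $\norm{\Delta\phicont^N}_{L^2}^2$ (the $-2\norm{\nabla\Delta\phicont^N}_{L^2}^2$ term). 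So the argument closes, but you should make explicit that the $L^2_tH^3$ bound is harvested alongside \eqref{eq:regu:improved} before you invoke it for \eqref{eq:regu:hoelder}; as written, the order of presentation obscures this dependency.
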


Existence and regularity of discrete solutions to \eqref{eq:discscheme} was studied in \cite{Metzger2024} under slightly less restricting assumptions and convergence along sequences was established.
Although \cite{Metzger2024} dealt with the case of homogeneous Neumann boundary conditions, the established results carry over to the periodic case.
For the readers convenience, we shall collect the relevant results:

\begin{lemma}\label{lem:regdisc}
Let the assumptions \ref{item:S}, \ref{item:T}, \ref{item:C}, \ref{item:I}, \ref{item:W1}, and \ref{item:W2} hold true.
Then, for given $\trkla{\tau,h}>0$, there exists a pathwise unique sequence $\trkla{\phi\h\nn,r\h\nn}_{n\geq1}$ which solves \eqref{eq:discscheme} for each $\omega\in\Omega$, such that the map $\trkla{\phi\h\nn,r\h\nn}\,:\,\Omega\rightarrow \Uh\times\mathds{R}$ is $\mathcal{F}_{t\nn}$-measurable.
These sequences satisfy the following uniform bounds:
\begin{subequations}\label{eq:discreteregularity}
\begin{align}\label{eq:regularity}
\begin{split}
&\expected{\max_{0\leq n\leq N}\norm{\phi\h\nn}_{H^1\trkla{\Om}}^{2\p}}+\expected{\max_{0\leq n\leq N}\abs{r\h\nn}^{2\p}}+\expected{\rkla{\sum_{n=1}^N\norm{\phi\h\nn-\phi\h\no}_{H^1\trkla{\Om}}^2}^\p}\\
&\qquad+\expected{\rkla{\sum_{n=1}^N\abs{r\h\nn-r\h\no}^2}^\p}+\expected{\rkla{\sum_{n=1}^N\tau\norm{\Delta\h\phi\h\nn}_{L^2\trkla{\Om}}^2}^\p}\\
&\qquad+\expected{\rkla{\sum_{n=1}^N\tau\norm{-\Delta\h\phi\h\nn+\frac{r\h\nn}{\sqrt{E\h\trkla{\phi\h\no}}}\Ih{F^\prime\trkla{\phi\h\no}}+\Xi\h\nn}_{L^2\trkla{\Om}}^2}^\p}\\
&\qquad +\tau^{-\p}\expected{\rkla{\sum_{n=1}^N\tau\norm{\Xi\h\nn}_{L^2\trkla{\Om}}^2}^\p}+\trkla{l\tau}^{-\p}\expected{\sum_{m=0}^{N-l}\tau\norm{\phi\h^{m+l}-\phi\h^m}_{L^2\trkla{\Om}}^{2\p}}\leq C\,,
\end{split}
\end{align}
\begin{align}
\expected{\max_{0\leq m\leq N}\abs{r\h^m-\sqrt{E\h\trkla{\phi\h^m}}}^\p}&\leq C \tau^{\p\trkla{1/2-\delta/2}}\,,\label{eq:errorr}
\end{align}
\end{subequations}
with $1\leq\p <\infty$, $l\in\tgkla{0,\ldots,N}$, and $0<\delta<1$ arbitrary.
\end{lemma}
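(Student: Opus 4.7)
The proof proceeds in four stages, closely mirroring the arguments in \cite{Metzger2024} which we adapt from the Neumann to the periodic setting (an essentially cosmetic change). First, for the existence, uniqueness, and measurability of $(\phi\h\nn,r\h\nn)$, I would substitute the expression for $r\h\nn-r\h\no$ from \eqref{eq:discscheme:r} into \eqref{eq:discscheme:phi}, producing a single linear equation for $\phi\h\nn\in\Uh$ with a right-hand side depending on $\phi\h\no$, $r\h\no$, and the increment $\sinc{n}$. The associated bilinear form decomposes into the manifestly coercive part $\iO\Ih{\phi\h\nn\psi\h}\dx+\tau\iO\nabla\phi\h\nn\cdot\nabla\psi\h\dx$ plus a rank-one perturbation built from $\Ih{F^\prime(\phi\h\no)}$; the perturbation is nonnegative when tested against $\phi\h\nn$, as it is a square (this is exactly the calculation that motivates the SAV structure). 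Hence Lax--Milgram applies pathwise, and $\mathcal{F}_{t\nn}$-measurability follows by induction since the map $(\phi\h\no,r\h\no,\sinc{n})\mapsto(\phi\h\nn,r\h\nn)$ is continuous and $\sinc{n}$ is $\mathcal{F}_{t\nn}$-measurable.

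Second, to obtain the moment bounds \eqref{eq:regularity}, I would derive a discrete energy identity by choosing $\psi\h=\phi\h\nn$ in \eqref{eq:discscheme:phi} and multiplying \eqref{eq:discscheme:r} by $2r\h\nn$ and summing. The drift terms involving $r\h\nn F^\prime(\phi\h\no)/\sqrt{E\h(\phi\h\no)}$ cancel by design, while the augmentation brackets in \eqref{eq:discscheme:phi} and \eqref{eq:discscheme:r} produce cross-terms that, taken together with the stochastic increments $\iO\Ih{\Phi\h(\phi\h\no)\sinc{n}\phi\h\nn}\dx$ and Taylor expansion of the martingale differences, mimic the Itô correction. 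Using the polarization identity $2a(a-b)=a^2-b^2+(a-b)^2$ and telescoping yields a Gronwall inequality for $\tfrac12\norm{\nabla\phi\h\nn}_{L^2(\Om)}^2+|r\h\nn|^2$ plus the increment and $\Delta\h\phi\h\nn$ control. The discrete Burkholder--Davis--Gundy inequality then upgrades these to the $2\p$-th moment bounds, after which the bound on the full ``flux'' $-\Delta\h\phi\h\nn+r\h\nn/\sqrt{E\h(\phi\h\no)}\,\Ih{F^\prime(\phi\h\no)}+\Xi\h\nn$ follows by reading \eqref{eq:discschemesimple} as an identity in $\Uh$ and taking $L^2$-norms. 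The bound on $\sum_n\tau\norm{\Xi\h\nn}_{L^2(\Om)}^2$ with its prefactor $\tau^{-\p}$ is immediate from the explicit definition of $\Xi\h\nn$, the uniform $W^{2,\infty}$-bound on $\varrho$ and $\g{k}$ via \ref{item:C} and \ref{item:W2}, the bound on $|r\h\nn|$, and the fact that each $\sinc{n}$ has $2\p$-th moment $\lesssim\tau^\p$.

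Third, the discrete Hölder-type estimate $\trkla{l\tau}^{-\p}\expected{\sum_m\tau\norm{\phi\h^{m+l}-\phi\h^m}_{L^2(\Om)}^{2\p}}\leq C$ is obtained by summing \eqref{eq:discschemesimple} from $m$ to $m+l-1$, splitting into drift and stochastic parts, and applying BDG to the martingale sum of length $l$ together with the already-established flux bound for the drift; this gives an increment of order $(l\tau)^{1/2}$ in $L^{2\p}(\Omega;L^2(\Om))$ uniformly in $m$ and $h$.

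Finally, the sharp estimate \eqref{eq:errorr} on $r\h^m-\sqrt{E\h(\phi\h^m)}$ — which I expect to be the principal obstacle and the whole raison d'être of the augmentation — is proved by writing a second-order Taylor expansion
\[
\sqrt{E\h(\phi\h\nn)}-\sqrt{E\h(\phi\h\no)}=\tfrac{1}{2\sqrt{E\h(\phi\h\no)}}\iO\Ih{F^\prime(\phi\h\no)(\phi\h\nn-\phi\h\no)}\dx+\mathcal{Q}_2\nn+\mathcal{R}_3\nn,
\]
where $\mathcal{Q}_2\nn$ is quadratic in $\phi\h\nn-\phi\h\no$ and $\mathcal{R}_3\nn$ is the cubic Taylor remainder. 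Subtracting \eqref{eq:discscheme:r}, the leading-order terms and the quadratic terms in $\sinc{n}$ match by the very construction of the bracketed augmentation in \eqref{eq:discscheme:r} (cf.~the discussion preceding the lemma), and one is left with the cubic remainder plus ``drift$\times$stochastic'' and ``drift$\times$drift'' cross terms whose conditional expectation is of order $\tau^{3/2}$ in $L^\p$. Telescoping, isolating the martingale part, and applying BDG and Hölder together with the already-established moment bounds then yields the stated $\tau^{\p(1/2-\delta/2)}$ rate, the loss of $\tau^{\delta/2}$ coming from a Gronwall-plus-logarithm balance needed to absorb the $\max_m$.
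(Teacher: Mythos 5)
Your stages one through three follow the same architecture as the paper, which for those parts simply cites \cite{Metzger2024} (existence via a Lax--Milgram-type argument for the linear system, the energy identity obtained by testing with $\phi\h\nn$ and multiplying the $r$-update by $2r\h\nn$, and BDG for the higher moments and the discrete time-regularity estimate); your sketches there are consistent with that machinery. Your stage four also starts the same way the paper does: Taylor-expand $\sqrt{E\h\trkla{\phi\h\nn}}$ around $\phi\h\no$, use $\Phi\h\trkla{\phi\h\no}\sinc{n}=\trkla{\phi\h\nn-\phi\h\no}+\tau\mu\h\nn$ to match the augmentation terms against the second-order Taylor terms, and estimate what is left.

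However, there is a genuine gap precisely at the one step the paper proves in detail. After the cancellation you are left with (among harmless drift cross terms) the cubic remainder, concretely a term of the form $\iO\Ihop\{\trkla{F^{\prime\prime}\trkla{\varphi_1}-F^{\prime\prime}\trkla{\phi\h\no}}\trkla{\phi\h\nn-\phi\h\no}^2\}\dx$. You assert this is ``of order $\tau^{3/2}$'' per step, but the only increment controls available from \eqref{eq:regularity} are (i) $\sum_n\norm{\phi\h\nn-\phi\h\no}_{H^1\trkla{\Om}}^2\leq C$ in $L^\p\trkla{\Omega}$ and (ii) $\tau^{-\p}\expected{\sum_n\tau\norm{\phi\h\nn-\phi\h\no}_{L^2\trkla{\Om}}^{2\p}}\leq C$; neither alone, nor a naive combination, bounds the summed cube at rate $\tau^{1/2}$. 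The paper's actual argument applies H\"older and the Gagliardo--Nirenberg inequality to write the cubic term as (roughly) $\norm{\phi\h\nn-\phi\h\no}_{H^1\trkla{\Om}}^{2-\delta+\delta}\norm{\phi\h\nn-\phi\h\no}_{L^2\trkla{\Om}}$ times $H^1$-norms of $\phi\h\nn,\phi\h\no$, and then uses Young's inequality with $\delta$-dependent exponents to split it into $\tau^{1/2-\delta/2}\norm{\phi\h\nn-\phi\h\no}_{H^1\trkla{\Om}}^2$ plus $\tau^{-\trkla{1/\delta-3/2+\delta/2}}\norm{\phi\h\nn-\phi\h\no}_{L^2\trkla{\Om}}^{2/\delta}$, each of which is then controlled by (i) and (ii) respectively. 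This interpolation--Young balance is where the loss of $\tau^{\p\delta/2}$ actually originates; your attribution of the $\delta$-loss to ``a Gronwall-plus-logarithm balance needed to absorb the $\max_m$'' is incorrect --- no Gronwall argument appears here, and the maximum over $m$ is absorbed trivially because all remainders are summed in absolute value (so no BDG or martingale isolation is needed for \eqref{eq:errorr} either). Without the interpolation step your proof of \eqref{eq:errorr} does not close.
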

\begin{proof}
The discrete existence result can be found in \cite[Lemma 4.1]{Metzger2024}.
The regularity results stated in \eqref{eq:regularity} were proven in \cite[Lemma 6.1, Corollary 6.1, Corollary 6.2, Lemma 6.2]{Metzger2024}.
The proof of \eqref{eq:errorr} follows the lines of \cite[Lemma 6.3]{Metzger2024}.
Yet, in \cite{Metzger2024} a more general class of potentials was considered leading to a smaller rate of convergence.
In particular, the result derived in \cite{Metzger2024} also covers the cases of $F\in C^{2,\nu}\trkla{\mathds{R}}$ ($\nu\in(0,1]$) and $F\in C^3\trkla{\mathds{R}}$ satisfying $\tabs{F^{\prime\prime\prime}\trkla{\zeta}}\leq C\trkla{1+\tabs{\zeta}^2}$.
As we here consider a polynomial double-well potential where $F^{\prime\prime\prime}$ has only linear growth, we shall briefly show how these differences result in the improved convergence rate.\\
As in the proof of \cite[Lemma 6.3]{Metzger2024}, we start with a Taylor expansion of $\sqrt{E\h\trkla{\phi\h\nn}}$, use that $\Phi\h\trkla{\phi\h\no}\sinc{n}=\trkla{\phi\h\nn-\phi\h\no}+\tau\mu\h\nn$ with 
\begin{align}
\mu\h\nn:=-\Delta\h\phi\h\nn +\frac{r\h\nn}{\sqrt{E\h\trkla{\phi\h\no}}}\Ih{F^\prime\trkla{\phi\h\no}} +\Xi\h\nn\,,
\end{align} 
and substitute \eqref{eq:discscheme:r}. 
Thus, we obtain
{\allowdisplaybreaks
\begin{align}\label{eq:taylorr}
\begin{split}
&\sqrt{E\h\trkla{\phi\h\nn}}-\sqrt{E\h\trkla{\phi\h\no}}=r\h\nn-r\h\no\\
&\quad-\frac{1}{4\sqrt{E\h\trkla{\phi\h\no}}}\iO\Ih{F^{\prime\prime}\trkla{\phi\h\no}\trkla{\phi\h\nn-\phi\h\no}\tau\mu\h\nn}\dx\\
&\quad+\frac{1}{4\sqrt{E\h\trkla{\phi\h\no}}}\iO\Ih{\trkla{F^{\prime\prime}\trkla{\varphi_1}-F^{\prime\prime}\trkla{\phi\h\no}}\trkla{\phi\h\nn-\phi\h\no}^2}\dx\\
&\quad+\frac{1}{8\tekla{E\h\trkla{\phi\h\no}}^{3/2}}\rkla{\iO\Ih{F^\prime\trkla{\phi\h\no}\trkla{\phi\h\nn-\phi\h\no}}\dx}\rkla{\iO\Ih{F^\prime\trkla{\phi\h\no}\tau\mu\h\nn}\dx}
\end{split}\\
\begin{split}\nonumber
&\quad-\frac{1}{8\tekla{E\h\trkla{\phi\h\no}}^{3/2}}\rkla{\iO\Ih{F^\prime\trkla{\phi\h\no}\trkla{\phi\h\nn-\phi\h\no}}\dx}\\
&\quad\qquad\qquad\times\rkla{\iO\Ih{F^{\prime\prime}\trkla{\varphi_2}\trkla{\phi\h\nn-\phi\h\no}^2}\dx}\\
&\quad-\frac{1}{8\tekla{E\h\trkla{\phi\h\no}}^{3/2}}\rkla{\tfrac12\iO\Ih{F^{\prime\prime}\trkla{\varphi_2}\trkla{\phi\h\nn-\phi\h\no}^2}\dx}^2\\
&\quad+\frac{1}{16\tekla{\widetilde{E\h}}^{5/2}}\rkla{\iO\Ih{F^\prime\trkla{\varphi_3}\trkla{\phi\h\nn-\phi\h\no}}\dx}^3\\
&=:r\h\nn-r\h\no +R_{1,n}+R_{2,n}+R_{3,n}+R_{4,n}+R_{5,n}+R_{6,n}\,
\end{split}
\end{align}}
with $\varphi_1,\varphi_2,\varphi_3\in\operatorname{conv}\tgkla{\phi\h\nn,\phi\h\no}$ and $\widetilde{E}\h\in\operatorname{conv}\tgkla{E\h\trkla{\phi\h\nn},E\h\trkla{\phi\h\no} }$.
Summing \eqref{eq:taylorr} from $n=1$ to $m\leq N$, noting that by definition $r\h^0=\sqrt{E\h\trkla{\phi\h^0}}$, and taking the $\p$-th power, the maximum over all $m$, and the expected value, we obtain
\begin{align}
\begin{split}
\expected{\max_{0\leq m\leq N}\abs{r\h^m -\sqrt{E\h\trkla{\phi\h^m}}}^\p}\leq&\,C\sum_{j=1}^6\expected{\rkla{\sum_{n=1}^N\abs{R_{j,n}}}^\p}\,.
\end{split}
\end{align}
It was already shown in \cite{Metzger2024} that $\expected{\rkla{\sum_{n=1}^N \abs{R_{j,n}}}^\p}\leq C\tau^{\p/2}$ for $j\in\tgkla{1,3,4,5,6}$.
Hence, it remains to take a closer look at the estimate for $\expected{\rkla{\sum_{n=1}^N \abs{R_{2,n}}}^\p}$:
By applying Hölder's inequality, the Gagliardo--Nirenberg inequality, and Young's inequality, we obtain for any $0<\delta<1$
\begin{align}
\begin{split}
&\abs{\iO\Ih{\trkla{F^{\prime\prime}\trkla{\varphi_1}-F^{\prime\prime}\trkla{\phi\h\no}}\trkla{\phi\h\nn-\phi\h\no}^2}\dx}\\
&\leq C\norm{\Ih{F^{\prime\prime\prime}\trkla{\widehat{\varphi}}}}_{L^6\trkla{\Om}}\norm{\phi\h\nn-\phi\h\no}_{L^{18/5}\trkla{\Om}}^3\\
&\leq C\rkla{\norm{\phi\h\nn}_{H^1\trkla{\Om}}+\norm{\phi\h\no}_{H^1\trkla{\Om}}}\norm{\phi\h\nn-\phi\h\no}_{H^1\trkla{\Om}}^{\delta+\trkla{2-\delta}}\norm{\phi\h\nn-\phi\h\no}_{L^2\rkla{\Om}}\\
&\leq C\rkla{\norm{\phi\h\nn}_{H^1\trkla{\Om}}^{1+\delta}+\norm{\phi\h\no}_{H^1\trkla{\Om}}^{1+\delta}}\left(\tau^{1/2-\delta/2} \norm{\phi\h\nn-\phi\h\no}_{H^1\trkla{\Om}}^2 \right.\\
&\qquad\qquad\qquad\left.+\tau^{-\trkla{1/\delta-3/2+\delta/2}}\norm{\phi\h\nn-\phi\h\no}_{L^2\trkla{\Om}}^{2/\delta}  \right)\,
\end{split}
\end{align}
with suitable $\widehat{\varphi}\in\operatorname{conv}\tgkla{\phi\h\nn,\phi\h\no}$.
Due to the lower bound on $E\h\trkla{\phi\h\no}$, Hölder's inequality, and \eqref{eq:regularity}, we obtain
\begin{multline}
\expected{\rkla{\sum_{n=1}^N \abs{R_{2,n}}}^\p}\leq\, C\rkla{\expected{\rkla{\tau^{1/2-\delta/2}\sum_{n=1}^N\norm{\phi\h\nn-\phi\h\no}^2_{H^1\trkla{\Om}}}^{2\p}}}^{1/2}\\
\,+C\rkla{\expected{\rkla{\tau^{-\trkla{1+1/\delta-3/2+\delta/2}}\sum_{n=1}^N\tau\norm{\phi\h\nn-\phi\h\no}_{L^2\trkla{\Om}}^{2/\delta}}^{2\p}}}^{1/2}\leq C\tau^{\p\trkla{1/2-\delta/2}}\,,
\end{multline}
which concludes the proof of \eqref{eq:errorr}.

\end{proof}

\section{Proof of the main result}\label{sec:error}
We are now in the position to prove Theorem \ref{thm:error}.
We define $\phicont\nn:=\phicont\trkla{t\nn}$ and $\mathfrak{e}\h\nn=\phicont\nn-\phi\h\nn$. 
Thus, substracting \eqref{eq:discschemesimple} from \eqref{eq:allencahn} and choosing $\proj{\err\nn}\in\Uh$ as a test function on the time interval $\trkla{t\no,t\nn}$, we obtain
\begin{align}\label{eq:difference}
\begin{split}
\iO&\rkla{\err\nn-\err\no}\proj{\err\nn}\dx + \int_{t\no}^{t\nn}\iO\nabla\err\nn\cdot\nabla\proj{\err\nn}\dx\dt\\
&=\int_{t\no}^{t\nn}\iO\nabla\rkla{\phicont\nn-\phicont}\cdot\nabla\proj{\err\nn}\dx\dt-\int_{t\no}^{t\nn}\iO\trkla{\ids-\Ihop}\gkla{\Delta\h\phi\h\nn \proj{\err\nn}}\dx\dt\\
&\quad+\int_{t\no}^{t\nn}\iO \rkla{\Ih{F^\prime\trkla{\phi\h\no}} -F^\prime\trkla{\phicont}}\proj{\err\nn}\dx\dt\\
&\quad + \int_{t\no}^{t\nn}\frac{r\h\nn-\sqrt{E\h\trkla{\phi\h\no}}}{\sqrt{E\h\trkla{\phi\h\no}}}\iO\Ih{F^\prime\trkla{\phi\h\no}}{\err\nn}\dx\dt+\int_{t\no}^{t\nn}\iO\Xi\h\nn{\err\nn}\dx\dt \\
&\quad+\sum_{k\in\mathds{Z}}\int_{t\no}^{t\nn}\iO\lambda_k\ekla{\Phi\trkla{\phicont}\g{k}-\Phi\h\trkla{\phi\h\no}\g{k}}\proj{\err\nn}\dx\dbeta_k\\
&=:\mathfrak{R}\nn_1+\mathfrak{R}\nn_2+\mathfrak{R}\nn_3+\mathfrak{R}\nn_4+\mathfrak{R}\nn_5+\mathfrak{R}\nn_6\,
\end{split}
\end{align}
$\Prob$-almost surely for all $1\leq n\leq N$.
We start by rewriting the left-hand side of \eqref{eq:difference} using the equivalence
\begin{multline}
\iO\rkla{\err\nn-\err\no}\proj{\err\nn}\dx\\
=\tfrac12\norm{\proj{\err\nn}}_{L^2\trkla{\Om}}^2+\tfrac12\norm{\proj{\err\nn-\err\no}}_{L^2\trkla{\Om}}^2-\tfrac12\norm{\proj{\err\no}}_{L^2\trkla{\Om}}^2
\end{multline}
and the estimate
\begin{align}
\begin{split}
&\int_{t\no}^{t\nn}\iO\nabla\err\nn\cdot\nabla\proj{\err\nn}\dx\dt\\
&=\int_{t\no}^{t\nn}\norm{\nabla\err\nn}_{L^2\trkla{\Om}}^2\dt -\int_{t\no}^{t\nn}\iO\nabla\err\nn\cdot\nabla\trkla{\phicont\nn-\proj{\phicont\nn}}\dx\dt\\
&\geq \trkla{1-\alpha}\int_{t\no}^{t\nn}\norm{\nabla \err\nn}_{L^2\trkla{\Om}}^2\dt -C\int_{t\no}^{t\nn} \esssup_{s\in\tekla{t\no,t\nn}} \norm{\nabla\phicont\trkla{s}-\nabla\proj{\phicont\trkla{s}}}_{L^2\trkla{\Om}}^2\dt \\
&\geq \trkla{1-\alpha}\int_{t\no}^{t\nn}\norm{\nabla \err\nn}_{L^2\trkla{\Om}}^2\dt -Ch^2\int_{t\no}^{t\nn} \esssup_{s\in\tekla{t\no,t\nn}} \norm{\Delta \phicont\trkla{s}}_{L^2\trkla{\Om}}^2\dt \,,
\end{split}
\end{align}
which is obtained using Young's inequality with $0<\alpha<\!\!<1$ and the error estimate \eqref{eq:error:proj}.
Hence, summing from $n=1$ to $M\leq N$, taking the expected value, and using the estimate
\begin{align}
\begin{split}
\tfrac14\norm{\err^M}_{L^2\trkla{\Om}}^2&\leq \tfrac12\norm{\proj{\err^M}}_{L^2\trkla{\Om}}^2+\tfrac12\norm{\phicont^M-\proj{\phicont^M}}_{L^2\trkla{\Om}}^2\\ 
&\leq \tfrac12\norm{\proj{\err^M}}_{L^2\trkla{\Om}}^2 + Ch^2\norm{\phicont^M}_{H^1\trkla{\Om}}^2
\end{split}
\end{align}
 provides
\begin{align}\label{eq:differenceSum}
\begin{split}
\tfrac14&\expected{\norm{\err^M}_{L^2\trkla{\Om}}^2}+\tfrac12\sum_{n=1}^M\expected{\norm{\proj{\err\nn-\err\no}}_{L^2\trkla{\Om}}^2} +\trkla{1-\alpha}\expected{\sum_{n=1}^M \tau \norm{\nabla\err\nn}_{L^2\trkla{\Om}}^2} \\
&\leq \tfrac12\norm{\proj{\err^0}}_{L^2\trkla{\Om}}^2 + Ch^2\expected{\norm{\phicont^M}_{H^1\trkla{\Om}}^2} +CTh^2\expected{\esssup_{s\in\tekla{0,T}}\norm{\Delta\phicont\trkla{s}}_{L^2\trkla{\Om}}^2}\\
&\qquad+\sum_{i=1}^6\expected{\sum_{n=1}^M\mathfrak{R}_i\nn}\,.
\end{split}
\end{align}
We shall now derive estimates for the remaining terms.
For the first remaining error term, we compute using Young's inequality with $0<\alpha<\!\!<1$, the stability of $\projop$ with respect to the $H^1$-seminorm, and \eqref{eq:regu:hoelder}
\begin{align}
\begin{split}
\sum_{n=1}^M\expected{\mathfrak{R}_1^n}\leq&\, \alpha\expected{\sum_{n=1}^M\tau\norm{\nabla\err\nn}_{L^2\trkla{\Om}}^2}+C\sum_{n=1}^N\expected{\int_{t\no}^{t\nn}\norm{\nabla u\nn-\nabla u}_{L^2\trkla{\Om}}^2\dt}\\
\leq&\, \alpha\expected{\sum_{n=1}^M\tau\norm{\nabla\err\nn}_{L^2\trkla{\Om}}^2}+C\tau\,.
\end{split}
\end{align}
Applying Lemma \ref{lem:Ihestimates}, an inverse inequality, Young's inequality, the stability of $\projop$, and Lemma \ref{lem:regdisc}, we derive for the second term
\begin{align}
\begin{split}
\sum_{n=1}^M\expected{\mathfrak{R}_2^n}\leq&\, Ch^2 \expected{\sum_{n=1}^N\tau \norm{\Delta\h \phi\h\nn}_{L^2\trkla{\Om}}^2} + \alpha\sum_{n=1}^M\tau\expected{\norm{\nabla \err\nn}_{L^2\trkla{\Om}}^2}\\
\leq&\, Ch^2 + \alpha\sum_{n=1}^M\tau\expected{\norm{\nabla \err\nn}_{L^2\trkla{\Om}}^2}\,.
\end{split}
\end{align}
To obtain an estimate for $\mathfrak{R}_{3}\nn$, we recall $F^\prime\trkla{s}=s^3-s$ and write
\begin{multline}
\rkla{\Ih{F^\prime\trkla{\phi\h\no}}-F^\prime\trkla{\phicont}}\proj{\err\nn}=\err\nn\proj{\err\nn} + \rkla{-\trkla{\phicont\nn}^3+\trkla{\phi\h\nn}^3}\rkla{\phicont\nn-\phi\h\nn}\\
+\rkla{\trkla{\phicont\nn}^3-\trkla{\phi\h\nn}^3}\rkla{\phicont\nn-\proj{\phicont\nn}} +\rkla{F^\prime\trkla{\phicont\nn}-F^\prime\trkla{\phicont}}\proj{\err\nn} \\
+\rkla{F^\prime\trkla{\phi\h\no}-F^\prime\trkla{\phi\h\nn}}\proj{\err\nn} -\trkla{\ids-\Ihop}\gkla{F^\prime\trkla{\phi\h\no}}\proj{\err\nn}\\
=:\mathfrak{I}_1+\mathfrak{I}_2+\mathfrak{I}_3+\mathfrak{I}_4+\mathfrak{I}_5+\mathfrak{I}_6\,.
\end{multline}
Straightforward computations show $\iO\mathfrak{I}_1\dx\leq \norm{\err\nn}_{L^2\trkla{\Om}}^2$ and $\iO \mathfrak{I}_2\dx\leq 0$.
For the third term, we compute using Sobolev embeddings, Young's inequality, and \eqref{eq:error:proj}
\begin{align}
\begin{split}
\iO\mathfrak{I}_3\dx=&\,\iO \rkla{\tabs{\phicont\nn}^2+\phicont\nn\phi\h\nn+\tabs{\phi\h\nn}^2}\rkla{\phicont\nn-\phi\h\nn}\rkla{\phicont\nn-\proj{\phicont\nn}}\dx\\
\leq&\,C\rkla{\norm{\phicont\nn}_{H^1\trkla{\Om}}^2+\norm{\phi\h\nn}_{H^1\trkla{\Om}}^2}\norm{\err\nn}_{H^1\trkla{\Om}}\norm{\phicont\nn-\proj{\phicont\nn}}_{L^2\trkla{\Om}}\\
\leq&\,\alpha\norm{\err\nn}_{H^1\trkla{\Om}}^2+C\rkla{\norm{\phicont\nn}_{H^1\trkla{\Om}}^4+\norm{\phi\h\nn}_{H^1\trkla{\Om}}^4}\norm{\phicont\nn-\proj{\phicont\nn}}_{L^2\trkla{\Om}}^2\\
\leq &\, \alpha\norm{\err\nn}_{H^1\trkla{\Om}}^2 +Ch^2 \rkla{\norm{\phicont\nn}_{H^1\trkla{\Om}}^4+\norm{\phi\h\nn}_{H^1\trkla{\Om}}^4}\norm{ \phicont\nn}_{H^1\trkla{\Om}}^2\,.
\end{split}
\end{align}
Similarly, we deduce for $\mathfrak{I}_4$ using the stability of $\projop$ with respect to the $H^1$-norm
\begin{align}
\begin{split}
\iO\mathfrak{I}_4\dx\leq&\, C\rkla{\norm{\phicont\nn}_{H^1\trkla{\Om}}^2+\norm{\phicont}_{H^1\trkla{\Om}}^2+1}\norm{\phicont\nn-\phicont}_{L^2\trkla{\Om}}\norm{\proj{\err\nn}}_{H^1\trkla{\Om}}\\
\leq&\,\alpha\norm{\err\nn}_{H^1\trkla{\Om}}^2+ C\trkla{\norm{\phicont\nn}_{H^1\trkla{\Om}}^4+\norm{\phicont}_{H^1\trkla{\Om}}^4+1}\norm{\phicont\nn-\phicont}_{L^2\trkla{\Om}}^2\,
\end{split}
\end{align}
and for the fifth error term
\begin{align}
\begin{split}
\iO\mathfrak{I}_5\dx\leq&\, \alpha\norm{\err\nn}_{H^1\trkla{\Om}}^2 \\
&\,+ C\rkla{\norm{\phi\h\nn}_{H^1\trkla{\Om}}^4+\norm{\phi\h\no}_{H^1\trkla{\Om}}^4+1}\norm{\phi\h\nn-\phi\h\no}_{L^2\trkla{\Om}}^2\,.
\end{split}
\end{align}
Finally, to deal with $\mathcal{I}_6$, we use $F^\prime\trkla{s}=s^3-s$ and Young's inequality to obtain
\begin{align}
\begin{split}
\iO\mathfrak{I}_6\dx\leq C\norm{\trkla{\ids-\Ihop}\gkla{\trkla{\phi\h\no}^3}}_{L^2\trkla{\Om}}^2+\alpha\norm{\err\nn}_{L^2\trkla{\Om}}^2\,.
\end{split}
\end{align}
To estimate the first term, we compute on each $K\in\Th$ by using the standard error estimate for $\Ihop$ (see e.g.~\cite[Theorem 4.4.4]{BrennerScott})
\begin{align}
\begin{split}
&\norm{\trkla{\ids-\Ihop}\gkla{\rkla{\phi\h\no}^3}}_{L^2\trkla{K}}^2\leq \abs{K}\norm{\trkla{\ids-\Ihop}\gkla{\rkla{\phi\h\no}^3}}_{L^\infty\trkla{K}}^2\\
&\qquad\leq C\abs{K}h^2\norm{\trkla{\phi\h\no}^2\nabla\phi\h\no}_{L^\infty\trkla{K}}^2\leq Ch^2\norm{\nabla\phi\h\no}_{L^2\trkla{K}}^2\norm{\phi\h\no}_{L^\infty\trkla{\Om}}^4\\
&\qquad\leq C h^2\norm{\nabla\phi\h\no}_{L^2\trkla{K}}^2 \rkla{\norm{\Delta\h\phi\h\no}_{L^2\trkla{\Om}}^{1/2}\norm{\phi\h\no}_{H^1\trkla{\Om}}^{1/2}+\norm{\phi\h\no}_{H^1\trkla{\Om}}}^4\,.
\end{split}
\end{align}
Here, we used a discrete version of the Gagliardo--Nirenberg inequality (see e.g.~\cite[Corollary 6.1]{GrunGuillenMetzger2016} or \cite[Lemma A.1]{Metzger2020}) to control the $L^\infty\trkla{\Om}$-norm using the $H^1\trkla{\Om}$-norm and a discrete counterpart of the $H^2\trkla{\Om}$-norm.
Hence, we obtain
\begin{multline}
\int_{t\no}^{t\nn}\iO\mathfrak{I}_6\dx\dt\leq \alpha\int_{t\no}^{t\nn}\norm{\err\nn}_{L^2\trkla{\Om}}^2\dt \\
+Ch^2\rkla{\max_{0\leq m\leq N}\norm{\phi\h^m}_{H^1\trkla{\Om}}^4}\int_{t\no}^{t\nn}\norm{\Delta\h\phi\h\no}_{L^2\trkla{\Om}}^2+\norm{\phi\h\no}_{H^1\trkla{\Om}}^2\dt\,.
\end{multline}
Summarizing the above estimates, we deduce for any $0<\delta<1$
\begin{align}
\begin{split}
\expected{\sum_{n=1}^M\mathfrak{R}_3\nn}\leq&\, \trkla{1+4\alpha}\sum_{n=1}^M\tau\expected{\norm{\err\nn}_{L^2\trkla{\Om}}^2} +3\alpha \expected{\sum_{n=1}^M\tau\norm{\nabla\err\nn}_{L^2\trkla{\Om}}^2}\\
&\,+Ch^2\expected{\sum_{n=1}^N\tau\rkla{\norm{\phicont\nn}_{H^1\trkla{\Om}}^6+\norm{\phi\h\nn}_{H^1\trkla{\Om}}^6}}\\
&\,+C\expected{\sum_{n=1}^N\int_{t\no}^{t\nn}\rkla{\norm{\phicont\nn}_{H^1\trkla{\Om}}^4+\norm{\phicont}_{H^1\trkla{\Om}}^4+1}\norm{\phicont\nn-\phicont}_{L^2\trkla{\Om}}^2\dt}\\
&\,+C\expected{\sum_{n=1}^N\tau\rkla{\norm{\phi\h\nn}_{H^1\trkla{\Om}}^4+\norm{\phi\h\no}_{H^1\trkla{\Om}}^4+1}\norm{\phi\h\nn-\phi\h\no}_{L^2\trkla{\Om}}^2}\\
&\,+Ch^2\expected{\max_{0\leq m\leq N}\norm{\phi\h^m}_{H^1\trkla{\Om}}^4 \sum_{n=1}^N\tau\rkla{\norm{\Delta\h\phi\h\no}_{L^2\trkla{\Om}}^2+\norm{\phi\h\no}_{H^1\trkla{\Om}}^2}}\\
\leq&\,\trkla{1+4\alpha}\sum_{n=1}^M\tau\expected{\norm{\err\nn}_{L^2\trkla{\Om}}^2} +3\alpha \expected{\sum_{n=1}^M\tau\norm{\nabla\err\nn}_{L^2\trkla{\Om}}^2}\\
&\,+Ch^2 +C\tau^{1-\delta} +C\tau +Ch^2\,,
\end{split}
\end{align}
due to Young's inequality, Hölder's inequality, and the regularity results stated in Lemma \ref{lem:regu} and Lemma \ref{lem:regdisc}.
To deal with the fourth remaining error term on the right-hand side of \eqref{eq:differenceSum}, we apply Young's inequality, Hölder's inequality, and the results collected in Lemma \ref{lem:regdisc} to deduce for $0<\delta<1$
\begin{align}
\begin{split}
\expected{\sum_{n=1}^M\mathfrak{R}_4\nn}&\leq\alpha\expected{\sum_{n=1}^M\tau\norm{\err\nn}_{L^2\trkla{\Om}}^2} \\
&~+C\expected{\max_{0\leq m\leq N}\norm{\phi\h^m}_{H^1\trkla{\Om}}^6\sum_{n=1}^N\tau\rkla{\abs{r\h\nn-r\h\no}^2+\abs{r\h\no-\sqrt{E\trkla{\phi\h\no}}}^2}}\\
&\leq\alpha\sum_{n=1}^M\tau\expected{\norm{\err\nn}_{L^2\trkla{\Om}}^2} +C\tau +C\tau^{1-\delta}\,.
\end{split}
\end{align}
The fifth term can be estimated using Young's inequality and Lemma \ref{lem:regdisc}:
\begin{align}
\expected{\sum_{n=1}^M\mathfrak{R}_5\nn}\leq \alpha\sum_{n=1}^M\tau\expected{\norm{\err\nn}_{L^2\trkla{\Om}}^2}+C\tau\,.
\end{align}
%
%
%
For the expected value of $\mathfrak{R}_6$, we deduce by adding and subtracting $\proj{\err\no}$ and applying Young's inequality and \Ito's isometry
\begin{align}
\begin{split}
\expected{\mathfrak{R}_6\nn} =&\,\expected{\sum_{k\in\mathds{Z}} \int_{t\no}^{t\nn}\iO \lambda_k\ekla{\Phi\trkla{\phicont}\g{k}-\Phi\h\trkla{\phi\h\no}\g{k}}\proj{\err\no}\dx\dbeta_k}\\
&\,+\expected{\sum_{k\in\mathds{Z}} \int_{t\no}^{t\nn}\iO \lambda_k\ekla{\Phi\trkla{\phicont}\g{k}-\Phi\h\trkla{\phi\h\no}\g{k}}\proj{\err\nn-\err\no}\dx\dbeta_k} \\
\leq&\,\tfrac14\expected{\norm{\proj{\err\nn-\err\no}}_{L^2\trkla{\Om}}^2}\\
&\,+\expected{\sum_{k\in\mathds{Z}}\int_{t\no}^{t\nn}\iO\lambda_k^2\ekla{\Phi\trkla{\phicont}\g{k}-\Phi\h\trkla{\phi\h\no}\g{k}}^2\dx\dt}\,.
\end{split}
\end{align}
We continue by estimating the second term on the right-hand side.
As $\Phi\h\trkla{\phi\h\no}\g{k}=0$ for all $k\not\in\mathds{Z}\h$, we can use assumptions \ref{item:C} and \ref{item:Z} to show
\begin{align}
\expected{\sum_{k\in\mathds{Z}\setminus\mathds{Z}\h}\int_{t\no}^{t\nn}\iO\rkla{\lambda_k\varrho\trkla{\phicont}\g{k}}^2\dx\dt}\leq C\tau\sum_{k\in\mathds{Z}\setminus\mathds{Z}\h}\norm{\lambda_k\g{k}}_{L^2\trkla{\Om}}^2\leq C\tau h^2\,.
\end{align}
To deal with the remaining part, we estimate
\begin{multline}
\expected{\sum_{k\in\mathds{Z}\h}\int_{t\no}^{t\nn}\iO\lambda_k^2\rkla{\varrho\trkla{\phicont}\g{k}-\Ih{\varrho\trkla{\phi\h\no}\g{k}}}^2\dx\dt}\\
\leq C\expected{\sum_{k\in\mathds{Z}\h}\int_{t\no}^{t\nn}\iO\lambda_k^2\abs{\rkla{\varrho\trkla{\phicont}-\varrho\trkla{\phi\h\no}}\g{k}}^2\dx\dt}\\
+C\expected{\sum_{k\in\mathds{Z}\h}\int_{t\no}^{t\nn}\iO\lambda_k^2\abs{\trkla{\ids-\Ihop}\gkla{\varrho\trkla{\phi\h\no}\g{k}}}^2\dx\dt}=:\mathfrak{J}_1+\mathfrak{J}_2
\end{multline}
To obtain an estimate for $\mathfrak{J}_2$, we again use the error estimates for $\Ihop$ on each $K\in\Th$ individually and obtain due to assumption \ref{item:C}
\begin{align}
\begin{split}
\lambda_k^2&\,\int_K\abs{\trkla{\ids-\Ihop}\gkla{\varrho\trkla{\phi\h\no}\g{k}}}^2\dx\leq \lambda_k^2\abs{K}\norm{\trkla{\ids-\Ihop}\gkla{\varrho\trkla{\phi\h\no}\g{k}}}_{L^\infty\trkla{K}}^2\\
&\leq C\lambda_k^2h^2\abs{K}\rkla{\norm{\nabla\phi\h\no}_{L^\infty\trkla{K}}^2\norm{\g{k}}_{L^\infty\trkla{K}}^2+\norm{\nabla\g{k}}_{L^\infty\trkla{K}}^2}\,.
\end{split}
\end{align}
Hence, using $\abs{K}\norm{\nabla\phi\h\no}_{L^\infty\trkla{K}}^2=\norm{\nabla\phi\h\no}_{L^2\trkla{K}}^2$ and assumption \ref{item:W2}, we obtain
\begin{align}
\mathfrak{J}_2\leq C h^2 \expected{\int_{t\no}^{t\nn}\norm{\nabla\phi\h\no}_{L^2\trkla{\Om}}^2+1\dt}\,.
\end{align}
Due to the Lipschitz continuity of $\varrho$ (cf.~\ref{item:C}), we deduce for $\mathfrak{J}_1$ using \ref{item:W2}
\begin{align}
\begin{split}
\mathfrak{J}_1\leq&\,  C\expected{\int_{t\no}^{t\nn}\norm{\phicont-\phicont\no}_{L^2\trkla{\Om}}^2 +\norm{\err\no}_{L^2\trkla{\Om}}^2\dt}\,.
\end{split}
\end{align}
Hence, we obtain for any $0<\delta<1$ using Lemma \ref{lem:regu} 
\begin{align}
\begin{split}
\sum_{n=1}^M\expected{\mathfrak{R}_6\nn}\leq&\,\tfrac14\sum_{n=1}^M\expected{\norm{\proj{\err\nn-\err\no}}_{L^2\trkla{\Om}}^2}+C\sum_{n=1}^M\tau\expected{\norm{\err\no}_{L^2\trkla{\Om}}^2} \\
&\,+C\sum_{n=1}^M\expected{\int_{t\no}^{t\nn}\norm{\phicont-\phicont\no}_{L^2\trkla{\Om}}^2\dt} +Ch^2\\
\leq&\,\tfrac14\sum_{n=1}^M\expected{\norm{\proj{\err\nn-\err\no}}_{L^2\trkla{\Om}}^2}+C\sum_{n=1}^M\tau\expected{\norm{\err\no}_{L^2\trkla{\Om}}^2}  \\
&\,+C\tau^{1-\delta}+Ch^2\,.
\end{split}
\end{align}
Combining the above results, an absorption argument provides
\begin{multline}
\rkla{\tfrac14-\trkla{1+6\alpha}\tau}\expected{\norm{\err^M}_{L^2\trkla{\Om}}^2}+\trkla{1-6\alpha}\tau\sum_{n=1}^M\expected{\norm{\err\nn}_{L^2\trkla{\Om}}^2}\\
\leq \tfrac12\norm{\proj{\err^0}}_{L^2\trkla{\Om}}^2 +C\sum_{n=1}^M\tau\expected{\norm{\err\no}_{L^2\trkla{\Om}}^2} +Ch^2+C\tau^{1-\delta}\,.
\end{multline}
Hence, recalling \ref{item:I} and choosing $\alpha>0$ and $\tau>0$ sufficiently small, the application of a discrete version of Gronwall's inequality (see e.g.~\cite[Lemma 10.5]{Thomee2006}) completes the proof of Theorem \ref{thm:error}.

\section{Numerical simulations}\label{sec:numerics}
In this section, we demonstrate the practicality of the augmented SAV scheme \eqref{eq:discscheme} and compute an experimental order of convergence with respect to the discretization parameters $h$ and $\tau$.
Using our \texttt{C++} framework EconDrop (cf.~\cite{Campillo2012}, \cite{Grun2013c}, \cite{Metzger2023}, \cite{Metzger2024}, and the references therein), we simulate the evolution of an elliptical shaped droplet in the periodic domain $\Om:=\trkla{0,1}^2$ and the time interval $\tekla{0,T}$ with $T=1.04$. 
The parameters are chosen as follows:
The width of the diffuse interface is governed by $\varepsilon=0.02$ and the double-well potential is shifted by $10^{-5}$ to guarantee positivity, i.e.~$F\trkla{\phi}:=\tfrac14\trkla{\phi^2-1}^2+10^{-5}$.
For our simulations, we consider a finite dimensional Wiener process of the form
\begin{align}
W\trkla{x,y,t}:=\sum_{k=-3}^{3}\sum_{l=-3}^3\lambda_k\lambda_l\g{k}\trkla{x}\g{l}\trkla{y}\beta_{kl}\trkla{t}\,
\end{align}
with $\lambda_0=\lambda_{\pm1}=1$, $\lambda_{\pm2}=1/4$, and $\lambda_{\pm3}=1/9$.
Here, $\trkla{\g{k}}_{k\in\mathds{Z}}$ are chosen as eigenfunctions of the one-dimensional Laplacian with periodic boundary conditions, i.e.
\begin{align}
\g{k}\trkla{x}:=\left\{\begin{array}{cl}
\sqrt{2}\cos\trkla{2\pi kx}&\text{for~}k\geq1\,,\\
1&\text{for~}k=0\,,\\
\sqrt{2}\sin\trkla{2\pi kx}&\text{for~}k\leq-1\,.
\end{array}\right.
\end{align}
Hence, the products $\trkla{\g{k}\trkla{x}\g{l}\trkla{y}}_{kl}$ form an an orthonormal basis of $L^2\trkla{\Om}$.
The mutually independent Brownian motions $\trkla{\beta_{kl}}_{kl}$ are  evaluated using the smallest considered time increment $\tau_{\mathrm{min}}=10^{-5}$ and the \texttt{xoshiro256plus} generator from version 4.24 of the TRNG library (cf.~\cite{BaukeMertens2007}).
$\Phi\h$ is defined as in \eqref{eq:def:Phih} with an indicator function $\varrho\trkla{\phi}:=\trkla{2\sqrt{\varepsilon}}^{-1}\max\tgkla{1-\phi^2,0}$, i.e.~the stochastic source or sink terms affect in the interfacial region.\\
In our numerical experiment we consider an elliptical shaped droplet centered at $(0.5,0.5)$ with diameter $0.6$ in $x$-direction and diameter $0.36$ in $y$-direction.
Figure \ref{fig:evolution} illustrates the evolution of this droplet computed using the smallest considered discretization parameters $\tau_{\mathrm{min}}=10^{-5}$ and $h_{\mathrm{min}}=2^{-8}$.
Here, the left-column depicts the expected value of the phase-field which is computed using 350 individual sample paths.
The remaining three columns show the evolution of different individual paths.
To allow for a comparison with the deterministic case, the zero level set of the deterministic solution is depicted in orange.
As shown in Figure \ref{fig:evolution}, the stochastic source terms do not only lead to slight oscillations of the deterministic solution, but are chosen sufficiently strong to counteract the regularizing effects of the mean curvature flow and results in significant variations in shape, size, and position of the droplets.\\
To validate the theoretical convergence results stated in Theorem \ref{thm:error}, we compute an experimental order of convergence.
For this reason, we compare the solutions obtained for different discretization parameters on a grid using $\widetilde{h}=2^{-8}$ and $\widetilde{\tau}=3.2\cdot10^{-3}$ with our reference solution $\phi_*$, which is computed using $h_{\min}=2^{-8}$ and $\tau_{\min}=10^{-5}$.
In particular, we shall evaluate the following discrete error norms
\begin{subequations}
\begin{align}
E_{h,\tau}^{L^2}:=&\rkla{\max_{1\leq n\leq\widetilde{N}}\widetilde{\mathds{E}}_{350}\ekla{\norm{\phi\h\nn -\phi_*\trkla{t\nn}}_{L^2\trkla{\Om}}^2}}^{1/2}\,,\\
E_{h,\tau}^{H^1}:=&\rkla{\widetilde{\tau}\sum_{n=1}^{\widetilde{N}}\widetilde{\mathds{E}}_{350}\ekla{\norm{\phi\h\nn-\phi_*\trkla{t\nn}}_{H^1\trkla{\Om}}^2}}^{1/2}\,
\end{align}
\end{subequations}
with $\widetilde{N}=T/\widetilde{\tau}$.
Here, $\widetilde{E}_{350}$ denoting the average of 350 samples serves as an approximation of the expected value.
Recalling the convergence result stated in Theorem \ref{thm:error}, we use the scaling $\tau\sim h^2$.
The results listed in Table \ref{tab:error} show that even for the coarsest considered discretization parameters the experimental order of convergence is only slightly below $1/2$.
Further, the results indicate that the error and the order of convergence is governed by the deviation with respect to the $L^2\trkla{\Omega;L^2\trkla{0,T;H^1\trkla{\Om}}}$-norm.

\begin{figure}
\begin{center}
\newcommand{\scale}{0.23}
\subfloat[][$t=0.08$]{
\includegraphics[width=\scale\textwidth]{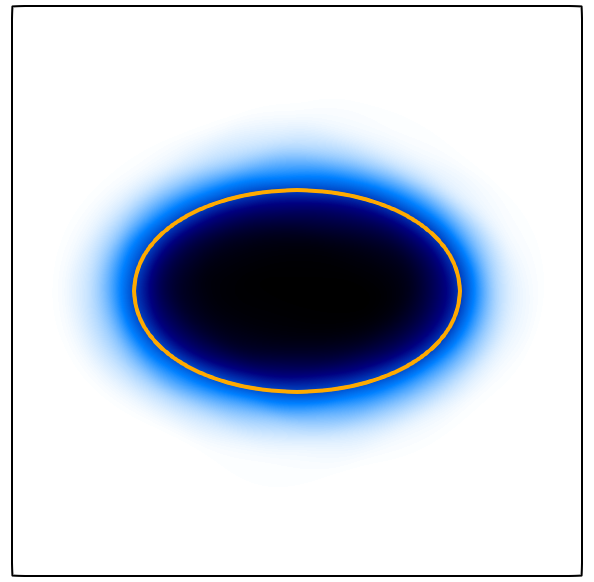}\hfill
\includegraphics[width=\scale\textwidth]{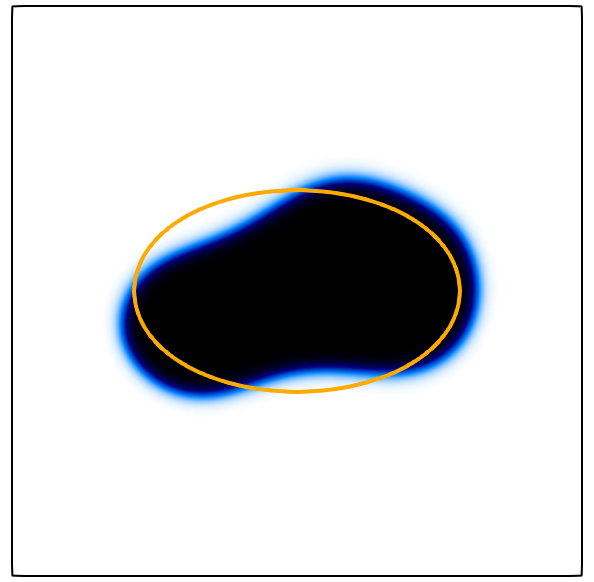}\hfill
\includegraphics[width=\scale\textwidth]{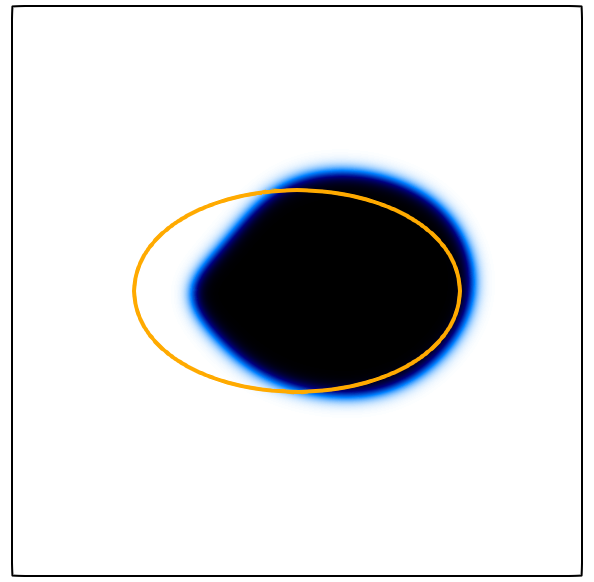}\hfill
\includegraphics[width=\scale\textwidth]{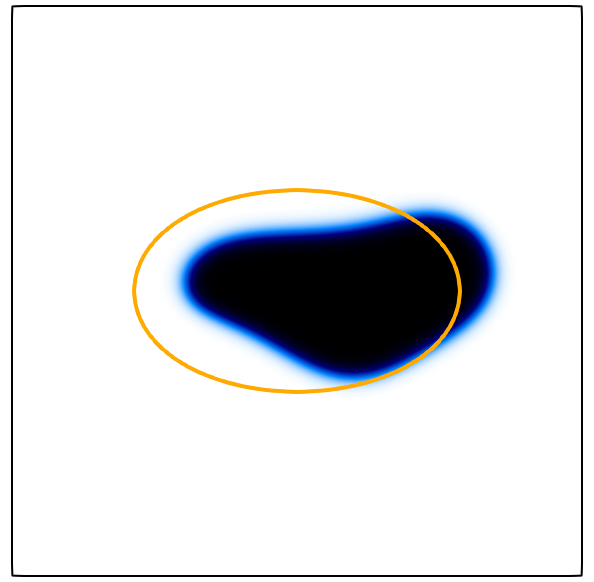}
}\\

\subfloat[][$t=0.4$]{
\includegraphics[width=\scale\textwidth]{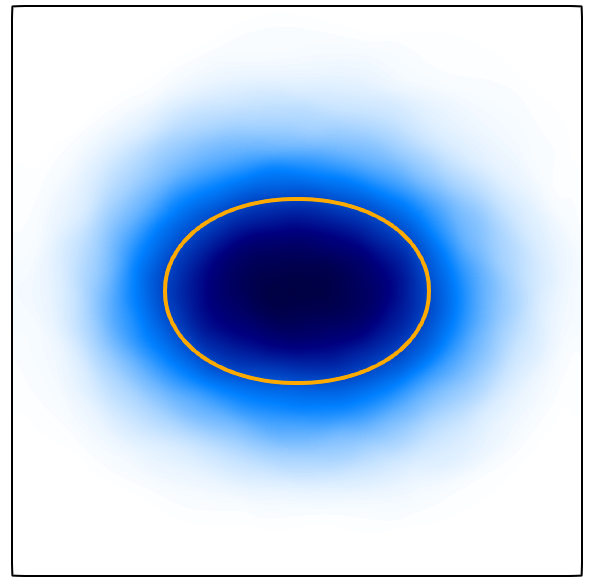}\hfill
\includegraphics[width=\scale\textwidth]{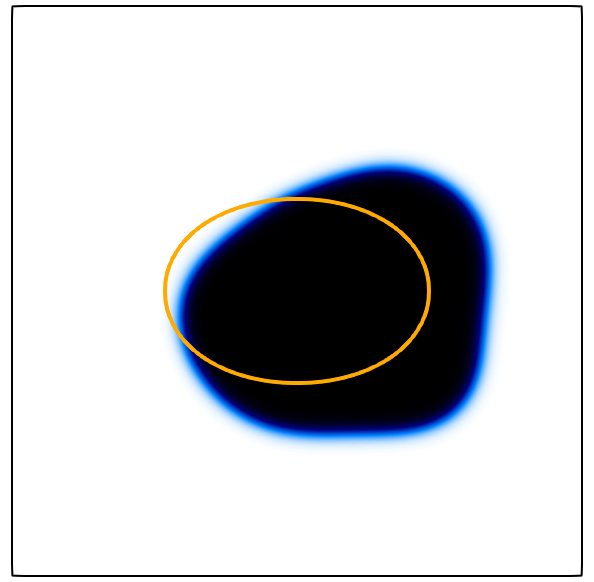}\hfill
\includegraphics[width=\scale\textwidth]{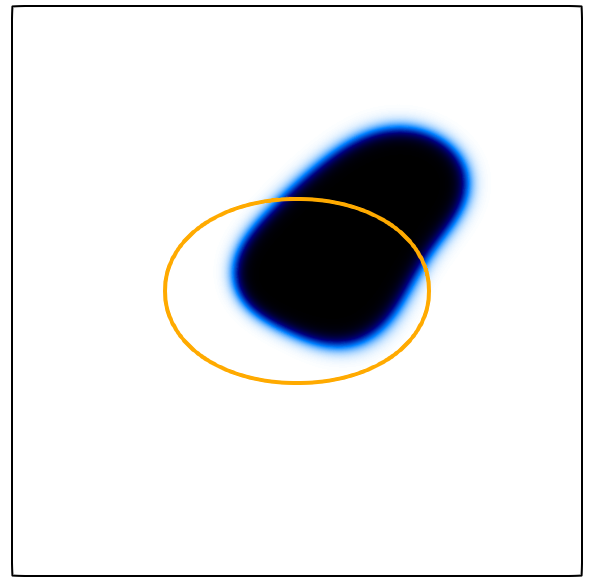}\hfill
\includegraphics[width=\scale\textwidth]{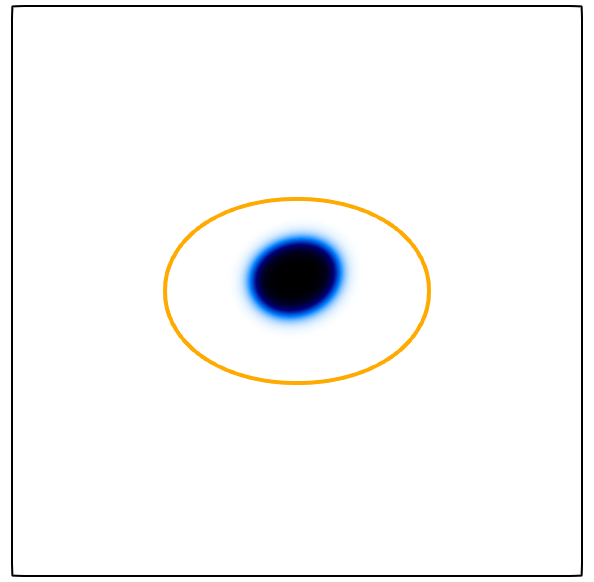}
}\\

\subfloat[][$t=0.8$]{
\includegraphics[width=\scale\textwidth]{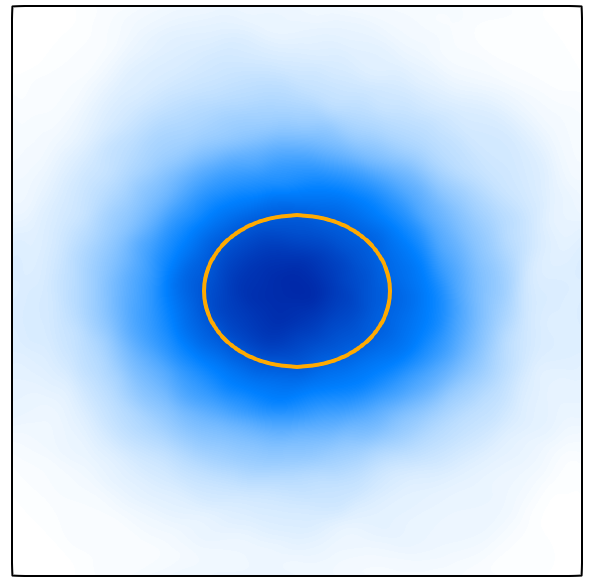}\hfill
\includegraphics[width=\scale\textwidth]{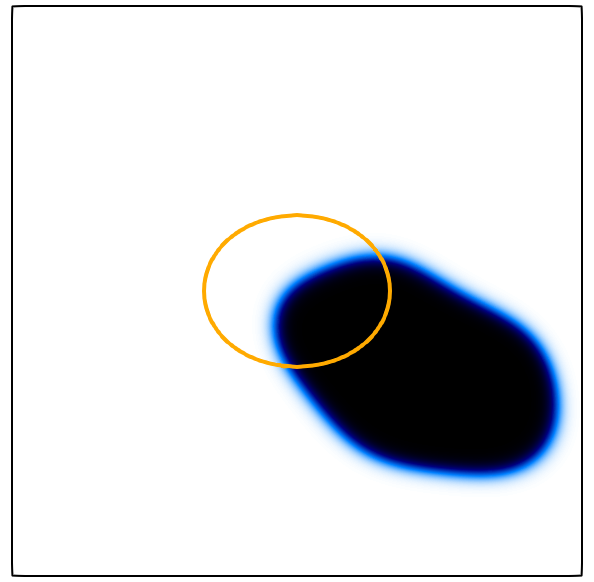}\hfill
\includegraphics[width=\scale\textwidth]{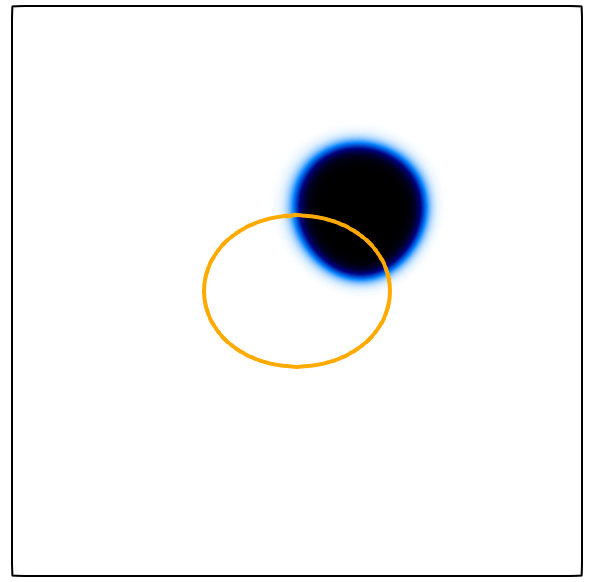}\hfill
\includegraphics[width=\scale\textwidth]{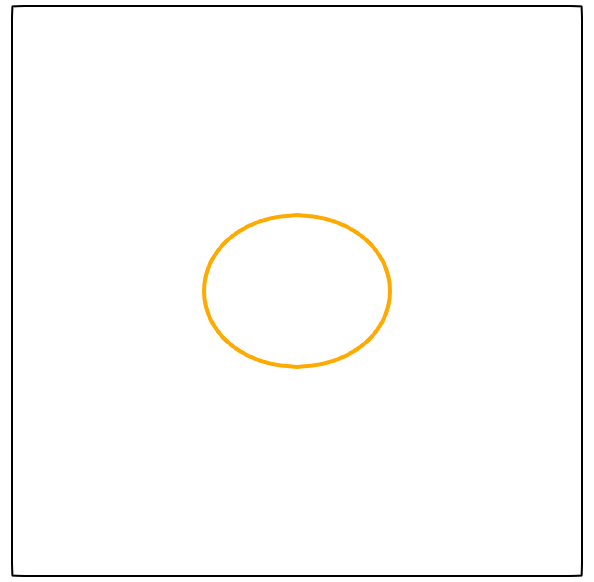}
}\\

\subfloat[][$t=1.04$]{
\includegraphics[width=\scale\textwidth]{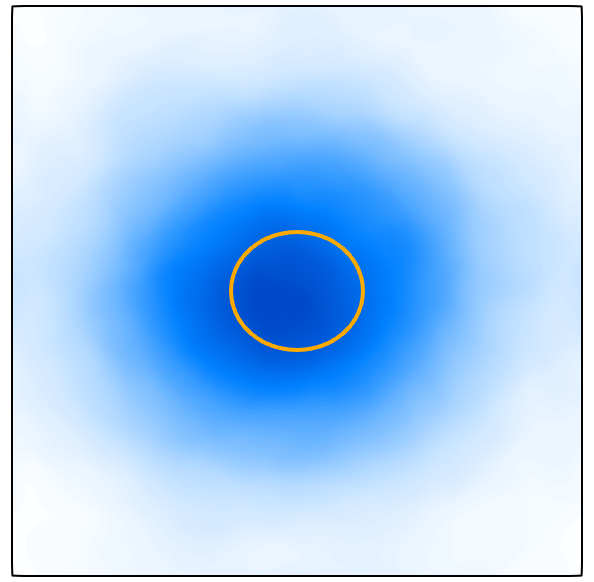}\hfill
\includegraphics[width=\scale\textwidth]{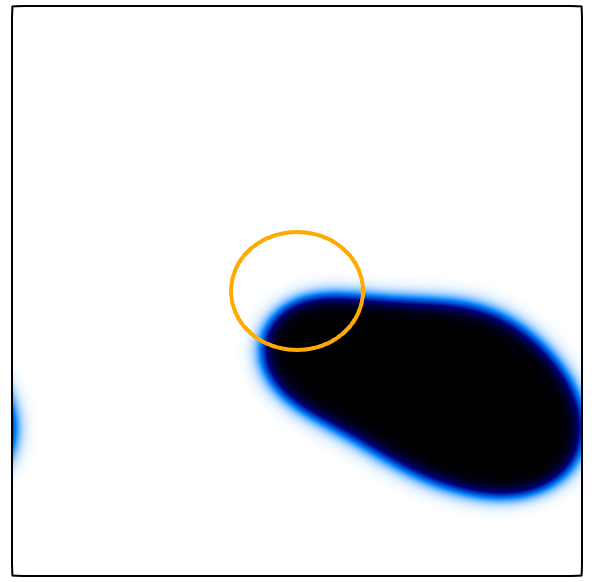}\hfill
\includegraphics[width=\scale\textwidth]{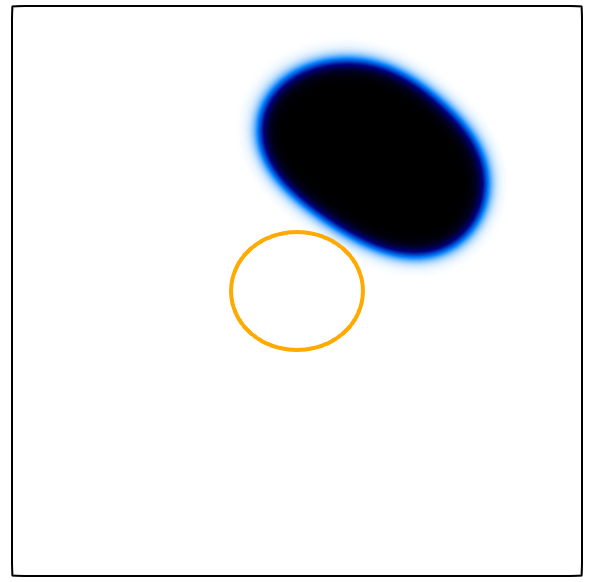}\hfill
\includegraphics[width=\scale\textwidth]{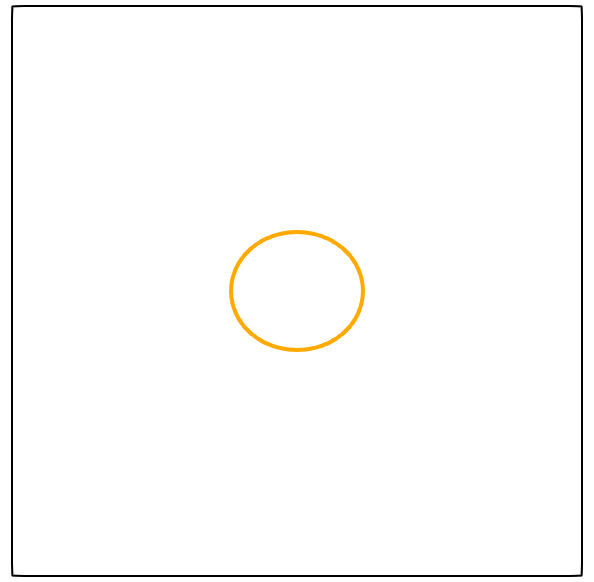}
}
\end{center}
\caption{Evolution of a droplet computed with $\tau=1\cdot10^{-5}$ and $h=2^{-8}$. The left column depicts the expected value, while the remaining columns illustrate three individual sample paths.
The orange line indicates the zero level set of the deterministic evolution.}
\label{fig:evolution}
\end{figure}

\begin{table}
\begin{tabular}{lr|ll|ll|cl}
~~$h$&$\tau$\phantom{100}& $E_{h,\tau}^{L^2}$ & EOC & $E_{h,\tau}^{H^1}$ & EOC&$\ekla{\rkla{E_{h,\tau}^{L^2}}^2+\rkla{E_{h,\tau}^{H^1}}^2}^{1/2}$&EOC\\
$2^{-7.5}$&$2\cdot10^{-5} $ &0.08& --- & 1.79 & ---& 1.79 &---     \\
$2^{-7}$&$4\cdot10^{-5}$    &0.17& 1.07  &3.50 & 0.97 & 3.50 & 0.97\\
$2^{-6.5}$&$8\cdot10^{-5}$  &0.28& 0.72 &5.54 & 0.66 & 5.55 &0.66  \\
$2^{-6}$&$16\cdot10^{-5}$   &0.36& 0.39 &7.61 & 0.46 & 7.62 &0.46   
\end{tabular}
\caption{Experimental order of convergence w.r.t.~$\trkla{\tau,h^2}$.}
\label{tab:error}
\end{table}

\section{Conclusion}\label{sec:conclusion}
We analyzed an augmented SAV discretization for the stochastic Allen--Cahn equation with multiplicative noise.
This scheme has the advantage of being linear with respect to the unknown quantities while still resembling the energetic structure of the original differential equation sufficiently well to allow for unconditional stability results making it a powerful tool for the numerical treatment of stochastic PDEs.
Due to the specific choice of the augmentation terms we are able to show that the difference between the scalar auxiliary variable and $\sqrt{\iO F\trkla{\phi}\dx}$ is of order $\tau^{1/2-\delta/2}$ for any $\delta>0$.
Hence, we were able to show that the convergence rate of our scheme is on a par with the optimal strong rates $\tau^{1/2-\delta/2}+h^2$ of convergence established in \cite{MajeeProhl2018} for an implicit, nonlinear discrete scheme.


\bibliographystyle{amsplain}
\providecommand{\bysame}{\leavevmode\hbox to3em{\hrulefill}\thinspace}
\providecommand{\MR}{\relax\ifhmode\unskip\space\fi MR }
\providecommand{\MRhref}[2]{%
  \href{http://www.ams.org/mathscinet-getitem?mr=#1}{#2}
}
\providecommand{\href}[2]{#2}

\end{document}